\newcommand{\N}{\mathbb{N}}	
\newcommand{\R}{\mathbb{R}}	
\newcommand{\rn}{\R^n}
\newcommand{\rem}{\R^m}
\newcommand{\vf}{\varphi}
\newcommand{\dd}{\,\mathrm{d}}	
\newcommand{\eps}{\varepsilon}		
\theoremstyle{plain}
\newtheorem{proposition}{Proposition}
\newtheorem{theorem}[proposition]{Theorem}
\newtheorem{lemma}[proposition]{Lemma}
\newtheorem{corollary}[proposition]{Corollary}
\theoremstyle{definition}
\newtheorem{definition}[proposition]{Definition}
\newtheorem{remark}[proposition]{Remark}
\newtheorem{ex}[proposition]{Example}
\theoremstyle{remark}
\title[ $C^\infty$-approximation  of functions ]{Optimal $C^\infty$-approximation of functions with exponentially or sub-exponentially integrable derivative }
\date{\today}
\author[Ambrosio]{Luigi Ambrosio}
\address[Ambrosio]{Scuola Normale Superiore, 56126 Pisa, Italy}
\email{luigi.ambrosio@sns.it}
\author[Nicolussi Golo]{Sebastiano Nicolussi Golo}
\address[Nicolussi Golo]{Department of Mathematics and Statistics, 40014 University of Jyväskylä, Finland}	
\email{sebastiano.s.nicolussi-golo@jyu.fi}
\author[Serra Cassano]{Francesco Serra Cassano}
\address[Serra Cassano]{Dipartimento di Matematica, Università di Trento, via Sommarive, 14, 38123 Trento, Italy}
\email{francesco.serracassano@unitn.it}
\thanks{
L.A.~and F.S.C.~have been supported by the PRIN 2017 project ``Gradient flows, Optimal Transport and Metric Measure Structures''.
S.N.G.~has been supported by the Academy of Finland (%
grant 328846 ``Singular integrals, harmonic functions, and boundary regularity in Heisenberg groups'',
grant 322898 ``Sub-Riemannian Geometry via  Metric-geometry and Lie-group Theory'',
grant 314172 ``Quantitative rectifiability in Euclidean and non-Euclidean spaces''),
S.N.G.~and F.S.C.~have been supported 
	by the INdAM – GNAMPA Project 2019 ``Rectifiability in Carnot groups''.
Data sharing not applicable to this article as no datasets were generated or analysed during the current study.}
\keywords{Meyers-Serrin, Sobolev--Orlicz Spaces}
\subjclass[2010]{%
46E30, 
35A35
}
\begin{document}
\begin{abstract}
We discuss Meyers-Serrin's type results for smooth approximations of functions $b=b(t,x):\mathbb{R}\times\mathbb{R}^n\to\mathbb{R}^m$, with convergence of an energy of the form
\[
\int_{\mathbb{R}}\int_{\mathbb{R}^n} w(t,x) \varphi\left(|Db(t,x)|\right)\mathrm{d} x \mathrm{d} t\,,
\]
where $w>0$ is a suitable weight function, and $\varphi:[0,\infty)\to [0,\infty)$ is a convex function with $\varphi(0)=0$ having exponential or sub-exponential growth.
\end{abstract}

\maketitle
\tableofcontents

\section{Introduction and results}
In this note we deal with the approximation of functions $b=b(t,x):I\times\Omega\to\rem$
by smooth ones, where $I\subset\R$ and $\Omega\subset\rn$  are an open interval and an open set, respectively.
Our main motivation comes from \cite{ANGSC}, where $m=n$ and $b$ is a possibly nonautonomous vector field. In that paper
we are dealing with {\it a priori} 
upper bounds for Sobolev norms of the flow of $b$ 
by means of
quantities $N_{\vf,w}(|Db|)$ that depend on the spatial derivative
$Db$ of $b$.
The quantities $N_{\vf,w}(|Db|)$ are \emph{energies}
of the form
\begin{equation*}
N_{\vf,w}(|Db|):=\, \int_I\int_\Omega w(t,x) \vf\left(|Db(t,x)|\right)\dd x \dd t\,,
\end{equation*}
where $w>0$ is a suitable weight function (related to ${\rm dist}(x,\partial\Omega)$, 
or to the length of the maximal interval of the ODE associated to $b$) 
and $\vf:[0,\infty)\to [0,\infty)$ is a convex function with $
\vf(0)=0$ having exponential or sub-exponential growth, the model case being 
$\exp_*(t):=\exp(t)-1$. 
When one tries to extend the {\it a priori} estimates from the case of smooth vector fields $b$ 
to those having only a Sobolev spatial regularity, one faces the difficulty of passing the quantity
$N_{\vf,w}$ to the limit. 

In this context, if $\vf$ had polynomial growth, a weighted and $t$-dependent version of the
celebrated Meyers-Serrin Theorem \cite{MS} would be applicable, providing even a smooth approximation $(b_h)_h$ with
$N_{\vf,w}(|D(b_h-b)|)\to 0$. In general, as the discussion below shows, this kind of approximation fails when
$\vf$ does not satisfy a doubling condition. However, we realized that the exponential (or subexponential) case
is a borderline one. Indeed, thanks to the weak subadditivity condition 
\[
\vf(a+b)\le\,k\,[(1+\vf(b))\,\vf(a)+\vf(b)]
\]
we are able to prove convergence of the energy $N_{\vf,w}$ and density of smooth functions with respect to
modular convergence when $\vf$ is strictly convex. This kind of convergence in energy, though weaker 
than convergence with respect to Luxenburg norm (or modular convergence, when $\vf$ is not strictly convex), should be compared with the theory of $BV$ functions, where smooth functions are
not dense in $BV$ norm, but dense in energy.
Moreover, this convergence will be
 sufficient to pass our {\it a priori} estimates to the limit
in the paper \cite{ANGSC}.\\

In the classical setting of Orlicz spaces (see Section~\ref{section2}) the weighted $\vf$-energy $N_{\vf,w}$ is called a 
{\it modular} and we put some of our results in this context.
The main approximation result of the note reads as follows.

\begin{theorem}\label{thm:mainnew}
Let $I\subset\R$ be an open interval and $\Omega\subset\rn$ an open set.
Let $w:\,I\times\Omega\to (0,\infty)$ be a Borel function uniformly bounded from above and from below on compact subsets of $I\times\Omega$. Let $\vf:\,[0,\infty)\to [0,\infty)$ be a convex function satisfying
$\vf(0)=\,0$ and
and for which there exists a positive constant $k_\vf$ such that
\begin{equation}\label{growthvf}
\vf(a+b)\le\,k_\vf[\vf(a)\,\vf(b)+\,\vf(a)+\vf(b)]\qquad\text{for all $a,\,b\in [0,\infty)$}\,.
\end{equation}
Let $b\in L^1_{\rm loc}(I;W^{1,1}_{\rm loc}(\Omega;\rem))\cap C^0(I\times\Omega;\rem)$ satisfy 
\begin{equation}\label{eq5ff48a0aw}
N_{\vf,w}(|Db|)<\infty\,.
\end{equation}
Then there exist $b_h\in C^\infty(I\times\Omega;\rem)$ 
satisfying 
\begin{equation}\label{convW^11loc}
b_h\to b \text{ in }L^1_{\rm loc}(I\times\Omega;\rem), 
\qquad Db_h\to Db\text{ in }L^1_{\rm loc}(I\times\Omega;\R^{nm})\,,\end{equation}
and
\begin{equation}\label{convL1mainthm}
w\vf\left(|Db_h|\right)\to w\vf\left(|Db|\right)\text{ in }L^1(I\times\Omega)\,.
\end{equation}
In particular,
\begin{equation}\label{eq5ff48a67}
\lim_{h\to\infty} N_{\vf,w}(|Db_h|)
= N_{\vf,w}(|Db|) \,.
\end{equation}
\end{theorem}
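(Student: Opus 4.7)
My plan is to follow a localized Meyers--Serrin construction combined with the weak subadditivity \eqref{growthvf}. Fix an exhaustion $V_0\subset\subset V_1\subset\subset\cdots$ of $I\times\Omega$ and a locally finite smooth partition of unity $\{\eta_j\}_j$ subordinate to $\{V_{j+1}\setminus\overline{V_{j-1}}\}_j$, with uniformly bounded overlap. For each $h\in\N$ set
\[
b_h:=\sum_j\rho_{\eps_{h,j}}*(\eta_j b)\in C^\infty(I\times\Omega;\rem),
\]
choosing $\eps_{h,j}\downarrow 0$ small enough that each convolution is supported near $\spt(\eta_j)$ and that, on $\overline{V_{j+1}}$,
\[
\|\rho_{\eps_{h,j}}*(\eta_j b)-\eta_j b\|_{W^{1,1}}+\|\rho_{\eps_{h,j}}*(b\otimes D\eta_j)-b\otimes D\eta_j\|_\infty\le c_j/h,
\]
where $c_j$ is a summable sequence tuned to the local sizes of $w$ and $\vf$; the sup-norm estimate critically uses the continuity of $b$. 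Standard arguments give $b_h\to b$ in $W^{1,1}_{\rm loc}$, proving \eqref{convW^11loc}, and, passing to a subsequence, $Db_h\to Db$ a.e., so $w\vf(|Db_h|)\to w\vf(|Db|)$ a.e.

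To upgrade this to $L^1(I\times\Omega)$-convergence, split $Db_h=A_h+E_h$, with
\[
A_h(z):=\int K_h(z,y)\,Db(y)\,\dd y,\quad K_h(z,y):=\sum_j\rho_{\eps_{h,j}}(z-y)\eta_j(y),\quad \mu_h(z):=\int K_h(z,y)\,\dd y,
\]
and
\[
E_h(z):=\sum_j\bigl[\rho_{\eps_{h,j}}*(b\otimes D\eta_j)(z)-(b\otimes D\eta_j)(z)\bigr],
\]
where the simplification of $E_h$ exploits the pointwise identity $\sum_j b\otimes D\eta_j\equiv 0$. By the choice of $\eps_{h,j}$, both $\|E_h\|_\infty$ on compact sets and $\|\mu_h-1\|_\infty$ tend to $0$. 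Jensen's inequality applied to the sub-probability measure $K_h(z,\cdot)\,\dd y/\mu_h(z)$, together with convexity and $\vf(0)=0$, gives
\[
\mu_h(z)\,\vf\bigl(|A_h(z)|/\mu_h(z)\bigr)\le\sum_j\rho_{\eps_{h,j}}*(\eta_j\vf(|Db|))(z),
\]
and the right-hand side, once weighted by $w$, converges in $L^1(I\times\Omega)$ to $\sum_j\eta_j w\vf(|Db|)=w\vf(|Db|)$ by standard mollifier estimates, the local boundedness of $w$, and the finiteness assumption \eqref{eq5ff48a0aw}. Two further applications of \eqref{growthvf} — one to transfer $\vf(|A_h|/\mu_h)$ into $\vf(|A_h|)$ using $|\mu_h-1|\ll 1$, and one to bound $\vf(|A_h|+|E_h|)$ via $\vf(|A_h|)$ and $\vf(|E_h|)$ using $\vf(|E_h|)\to 0$ locally uniformly — produce a dominating sequence $G_h\ge w\vf(|Db_h|)$ with $G_h\to w\vf(|Db|)$ in $L^1(I\times\Omega)$.

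A Vitali-type generalized dominated convergence lemma (if $f_h\to f$ a.e., $|f_h|\le g_h$, $|f|\le g$, and $g_h\to g$ in $L^1$, then $f_h\to f$ in $L^1$) then yields \eqref{convL1mainthm}, and \eqref{eq5ff48a67} follows by integration. The technically hardest part is the tight control needed in the two applications of \eqref{growthvf}: the multiplicative cross-term $\vf(a)\vf(b)$ in the weak subadditivity must be absorbed without producing a constant $C>1$ in the final limit $G_h\to Cw\vf(|Db|)$, since only then does the energy converge to the exact limit and not merely to a multiple of it. This requires that one of the two arguments in each application (either $\vf(|E_h|)$ or the $\mu_h$-correction) is \emph{uniformly} small rather than merely small pointwise, which is exactly what the continuity hypothesis on $b$ and the delicate coupling of $\eps_{h,j}$ to the modulus of continuity of $b$ and the local size of $w$ are designed to achieve.
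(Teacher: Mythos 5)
Your overall strategy — localize with a partition of unity, mollify at scales $\eps_{h,j}$, split the gradient into a ``convex combination of mollified gradients'' plus an error term that is small in $L^\infty$ by continuity of $b$, invoke the weak subadditivity \eqref{growthvf} and Jensen, and conclude with a Vitali argument — is the right one and matches the paper in spirit. The treatment of $E_h$ is fine: the uniform smallness of $\vf(|E_h|)$ indeed kills that cross term. But there is a genuine gap in the step that is supposed to ``transfer $\vf(|A_h|/\mu_h)$ into $\vf(|A_h|)$ using $|\mu_h-1|\ll 1$''.

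The trouble is that with your construction $b_h=\sum_j\rho_{\eps_{h,j}}*(\eta_j b)$, the kernel $K_h(z,\cdot)$ has total mass $\mu_h(z)=\sum_j(\rho_{\eps_{h,j}}*\eta_j)(z)$ which in general takes values $>1$ (the $\eps_{h,j}$ are different for each $j$, so mass from one chart leaks into its neighbours). When $\mu_h>1$, to pass from $\vf(|A_h|/\mu_h)$ to $\vf(|A_h|)$ you must write $|A_h|=a+b$ with $a=|A_h|/\mu_h$ and $b=(\mu_h-1)\,a$, and \eqref{growthvf} then gives a multiplicative factor $\bigl(1+\vf\bigl((\mu_h-1)\,|A_h|/\mu_h\bigr)\bigr)$ in front of $\vf(|A_h|/\mu_h)$. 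Since $\vf$ grows at least exponentially and $|A_h|$ is comparable to a mollified $|Db|$, which is \emph{unbounded} under the hypothesis $N_{\vf,w}(|Db|)<\infty$, the quantity $(\mu_h-1)|A_h|/\mu_h$ is \emph{not} uniformly small no matter how small you take $|\mu_h-1|$ to be, and $\vf\bigl((\mu_h-1)|A_h|/\mu_h\bigr)$ is unbounded. The cross term in \eqref{growthvf} therefore cannot be absorbed, the constant does not tend to $1$, and your dominating sequence $G_h$ does not converge in $L^1$ to $w\vf(|Db|)$. This is exactly the phenomenon you flag at the end — ``one of the two arguments in each application must be uniformly small'' — but the $\mu_h$-correction cannot be made uniformly small inside $\vf$ for this class of $\vf$.

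The paper sidesteps the problem with a slightly different construction: it takes two families, a partition of unity $\zeta_j$ and cut-offs $\psi_j\in C_c^\infty$ with $\psi_j\equiv 1$ on $\spt\zeta_j$, and defines $b_\delta=\sum_j\zeta_j\,(\rho_{\eps_j}*(\psi_j b))$, i.e.\ the partition of unity sits \emph{outside} the convolution. Then
\[
Db_\delta=\underbrace{\sum_j\zeta_j\,\bigl(\rho_{\eps_j}*D(\psi_j b)\bigr)}_{v_\delta}+\underbrace{\sum_j\nabla\zeta_j\otimes\bigl(\rho_{\eps_j}*(\psi_j b)-\psi_j b\bigr)}_{z_\delta},
\]
and since $\sum_j\zeta_j\equiv 1$ the term $v_\delta$ is a genuine convex combination of mollified gradients, so Jensen gives directly
\[
\vf(|v_\delta|)\le\sum_j\zeta_j\,\bigl(\rho_{\eps_j}*\vf(|D(\psi_j b)|)\bigr)=:G_\delta
\]
with \emph{no} $\mu_h$-normalisation. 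A single application of \eqref{growthvf} with the uniformly small error $\vf(|z_\delta|)$ then yields the equi-integrable bound. If you replace your ansatz for $b_h$ by this $\zeta_j/\psi_j$ construction, the rest of your argument — $L^\infty$ control of the error via continuity of $b$, the summable weights $c_j$ tuned to $\sup_{Q_j}w$, a.e.\ convergence plus Vitali — goes through as you intended.
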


Besides the model case of $\vf=\exp_*$, we are able to consider the functions
\begin{equation}\label{E*function}
\exp_{\gamma,\tau}^*(t):=\,\exp_{\gamma,\tau}(t)-1 ,
\end{equation}
where $0\le\,\gamma\le\,1$, $\tau>0$ and
\begin{equation}
\label{Efunction2}
\exp_{\gamma,\tau}(t):=\exp\left( \displaystyle\frac{t}{(\log(t+\tau))^\gamma}\right) .
\end{equation}

It is easy to see that a convex function $\vf$ with polynomial growth satisfies \eqref{growthvf}, see Remark \ref{convdoublvf}.
We will show in Lemma~\ref{admisubexpbeh} that $\exp_{\gamma,\tau}^*$ satisfies the conditions in Theorem~\ref{thm:mainnew} for $\vf$ with $k_\vf=1$, if
$\tau$ is sufficiently large.
The functions $\exp_{\gamma,\tau}$, though convex, do not have null derivative at $0$, and therefore do not fit exactly in the theory
of $N$-functions. Therefore, in order to provide a bridge with the theory of $N$-functions of Orlicz spaces, we will also consider the modified functions
\begin{equation}\label{Agammafunction}
\widetilde \exp_{\gamma,\tau}(t):=\,\exp_{\gamma,\tau}(t)-1-\frac{t}{(\log \tau)^\gamma}=
\exp_{\gamma,\tau}^*(t)-\frac{t}{(\log \tau)^\gamma}\, ,
\end{equation}
which are indeed $N$-functions and can be treated it by comparison with $\exp_{\gamma,\tau}^*$.

\begin{corollary}\label{subexpoenergy}  
Let $w:\,I\times\Omega\to (0,\infty)$ be a Borel function
uniformly bounded from above and from below on compact subsets of $I\times\Omega$. 
Let $b\in L^1_{\rm loc}(I;W^{1,1}_{\rm loc}(\Omega;\rem))\cap C^0(I\times\Omega;\rem)$ 
satisfy \eqref{eq5ff48a0aw}
with
\begin{equation}\label{vf=Egamma}
\text{$\vf=\,\widetilde{\exp}_{\gamma,\tau}$ with $\tau$ sufficiently large}
\end{equation}
and
\begin{equation}\label{vf=Agamma}
\text{either $w|Db|\in L^1(I\times\Omega)$, or $w\in L^1(I\times\Omega)$}\,.
\end{equation}
Then there exist $b_h\in C^\infty(I\times\Omega;\rem)$ 
satisfying \eqref{convW^11loc}, \eqref{convL1mainthm} and \eqref{eq5ff48a67}.
\end{corollary}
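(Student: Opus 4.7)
The plan is to reduce Corollary~\ref{subexpoenergy} to Theorem~\ref{thm:mainnew} applied with $\vf=\exp^*_{\gamma,\tau}$, which is admissible by Lemma~\ref{admisubexpbeh} once $\tau$ is large enough, and then to transfer the resulting $\exp^*_{\gamma,\tau}$-energy convergence to $\widetilde{\exp}_{\gamma,\tau}$-energy convergence via the affine identity \eqref{Agammafunction}. To invoke the theorem I first need $N_{\exp^*_{\gamma,\tau},w}(|Db|)<\infty$, and the identity gives
\[
N_{\exp^*_{\gamma,\tau},w}(|Db|)=N_{\widetilde{\exp}_{\gamma,\tau},w}(|Db|)+(\log\tau)^{-\gamma}\int_I\int_\Omega w|Db|\dd x\dd t,
\]
so it suffices to show $w|Db|\in L^1(I\times\Omega)$. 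The first case of \eqref{vf=Agamma} grants this directly; in the second case $w\in L^1$, I would use that the super-linear growth of $\widetilde{\exp}_{\gamma,\tau}$ yields $\widetilde{\exp}_{\gamma,\tau}(t)\ge t$ for $t\ge M$ (some $M=M(\gamma,\tau)$), and split the integral to obtain $\int w|Db|\le M\|w\|_{L^1}+N_{\widetilde{\exp}_{\gamma,\tau},w}(|Db|)<\infty$.

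Next, Theorem~\ref{thm:mainnew} with $\vf=\exp^*_{\gamma,\tau}$ delivers smooth $b_h$ fulfilling \eqref{convW^11loc} together with $w\exp^*_{\gamma,\tau}(|Db_h|)\to w\exp^*_{\gamma,\tau}(|Db|)$ in $L^1(I\times\Omega)$. The heart of the argument is then to upgrade this to $w|Db_h|\to w|Db|$ in $L^1(I\times\Omega)$, since
\[
w\widetilde{\exp}_{\gamma,\tau}(|Db_h|)=w\exp^*_{\gamma,\tau}(|Db_h|)-(\log\tau)^{-\gamma}w|Db_h|
\]
will then yield \eqref{convL1mainthm} for $\vf=\widetilde{\exp}_{\gamma,\tau}$ and, by integration, \eqref{eq5ff48a67}. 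My tool is a pointwise linear bound: the map $t\mapsto t/\exp^*_{\gamma,\tau}(t)$ is continuous on $(0,\infty)$, extends to $(\log\tau)^\gamma$ at $0^+$, and tends to $0$ at infinity thanks to the super-linear growth of $\exp^*_{\gamma,\tau}$, hence is bounded by some $C_0<\infty$. Therefore $w|Db_h|\le C_0\,w\exp^*_{\gamma,\tau}(|Db_h|)$ pointwise, with the right-hand side $L^1$-convergent. Passing to a subsequence along which $Db_h\to Db$ almost everywhere, obtained from \eqref{convW^11loc} via a diagonal extraction, the generalized dominated convergence theorem---with an $L^1$-convergent dominating sequence in place of a fixed integrable dominant---yields $w|Db_h|\to w|Db|$ in $L^1$ along the subsequence. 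A Urysohn-type argument, using that the limit does not depend on the chosen subsequence, upgrades this to the full sequence.

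The step I expect to be the main obstacle is precisely this linear-energy upgrade $w|Db_h|\to w|Db|$: although the pointwise bound $t\le C_0\exp^*_{\gamma,\tau}(t)$ is elementary, classical dominated convergence does not apply because no fixed $L^1$ dominant is at hand, so one must invoke the generalized version with a convergent dominating sequence, extract subsequences, and argue that the limit is subsequence-independent. Once this is secured, assembling \eqref{convW^11loc} (inherited from the theorem) with the linear convergence $w|Db_h|\to w|Db|$ and the exponential convergence $w\exp^*_{\gamma,\tau}(|Db_h|)\to w\exp^*_{\gamma,\tau}(|Db|)$ in $L^1$ immediately yields all the conclusions of the corollary.
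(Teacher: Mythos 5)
Your proposal is correct and follows essentially the same route as the paper: verify $N_{\exp^*_{\gamma,\tau},w}(|Db|)<\infty$ so that Theorem~\ref{thm:mainnew} applies with $\vf=\exp^*_{\gamma,\tau}$, then transfer the resulting convergence to $\widetilde{\exp}_{\gamma,\tau}$. The paper's final step is a bit more economical than yours: instead of first proving $w|Db_h|\to w|Db|$ in $L^1$ via a convergent-dominant (Pratt/generalized DCT) argument and then subtracting, it simply observes the pointwise domination $w\,\widetilde{\exp}_{\gamma,\tau}(|Db_h|)\le w\,\exp^*_{\gamma,\tau}(|Db_h|)$ and applies Vitali's theorem directly, since $L^1$-convergence of the dominant already gives equi-integrability of the dominated sequence; this avoids the detour through the linear term and the subsequence/Urysohn bookkeeping. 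Similarly, in the case $w\in L^1$, the paper bounds $\int w\,\exp^*_{\gamma,\tau}(|Db|)$ directly using $\tfrac12\exp^*_{\gamma,\tau}\le\widetilde{\exp}_{\gamma,\tau}$ on $\{t\ge\bar t\}$ rather than first deducing $w|Db|\in L^1$, but both variants are valid.
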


Notice that, when the functions $\vf$ we are dealing with have a more than polynomial growth at infinity
and the weight function $w$ is uniformly bounded from 0 on compact subsets, Corollary~\ref{subexpoenergy}, the Sobolev Embedding Theorem grants
continuity of $b$ with respect to the spatial variable. 
But,
in the proof of
Theorem~\ref{thm:mainnew}, it seems that the continuity of $b$ with respect to $t$ is also needed 
(cf.~the estimate of the term $z_\delta$).
However, if we assume the
weight $w$ to be time-independent,
we can adapt the proof of Theorem~\ref{thm:mainnew} to drop the continuity assumption on $b$,
and we obtain the following extension of Corollary~\ref{subexpoenergy}.

\begin{theorem}\label{subexpoenergywindepx}
Let $w:\,\Omega\to (0,\infty)$ be  a Borel function
uniformly bounded from above and from below on compact subsets of $\Omega$. 
Assume that either $\vf=\exp^*_{\gamma,\tau}$ or $\vf=\widetilde{\exp_{\gamma,\tau}}$ and \eqref{vf=Agamma} holds, with $\tau$ sufficiently large
and let $b\in L^1_{\rm loc}(I;W^{1,1}_{\rm loc}(\Omega;\rem))$  satisfy \eqref{eq5ff48a0aw}.
Then there exist $b_h\in C^\infty(I\times\Omega;\rem)$ satisfying \eqref{convW^11loc}, \eqref{convL1mainthm} and \eqref{eq5ff48a67}.
\end{theorem}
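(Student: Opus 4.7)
The strategy is to reduce to the continuous setting of Theorem~\ref{thm:mainnew} and Corollary~\ref{subexpoenergy} by a preliminary mollification of $b$ in the time variable only, which is permissible thanks to the hypothesis that $w=w(x)$ does not depend on $t$. Let $\rho_\eps$ be a smooth mollifier on $\R$ supported in $[-\eps,\eps]$ and set $b^\eps(t,x):=(\rho_\eps\ast_t b)(t,x)$ for $t\in I_\eps:=\{t\in I:\mathrm{dist}(t,\partial I)>\eps\}$. Standard mollification facts give $b^\eps\to b$ and $Db^\eps\to Db$ in $L^1_{\mathrm{loc}}$ as $\eps\to 0$, and $b^\eps$ is $C^\infty$ in $t$ pointwise in $x$. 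Since $\vf$ is non-decreasing convex and $w$ does not depend on $t$, Jensen's inequality yields
\[
w(x)\vf\bigl(|Db^\eps(t,x)|\bigr)\le\bigl(\rho_\eps\ast_t(w\vf(|Db|))\bigr)(t,x).
\]
The right-hand side converges to $w\vf(|Db|)$ in $L^1$ and provides a convergent dominator; combined with the a.e.\ convergence $Db^\eps\to Db$ (up to a subsequence), a generalized dominated convergence argument gives $w\vf(|Db^\eps|)\to w\vf(|Db|)$ in $L^1(I_\eps\times\Omega)$ as $\eps\to 0$.

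To apply Theorem~\ref{thm:mainnew} or Corollary~\ref{subexpoenergy} to $b^\eps$ we must verify that $b^\eps\in C^0(I_\eps\times\Omega;\rem)$. For a.e.~$t\in I$, the finiteness of $\int_\Omega w(x)\vf(|Db(t,x)|)\dd x$ together with the local lower bound on $w$ and the super-polynomial growth of $\vf$ implies $|Db(t,\cdot)|\in L^p_{\mathrm{loc}}(\Omega)$ for every finite $p$; Morrey--Sobolev embedding then provides a locally H\"older continuous representative of $b(t,\cdot)$ whose local modulus is controlled by the spatial modular. Convolving in $t$ transfers this estimate to each slice $b^\eps(t,\cdot)$ with a local modulus uniform for $t\in I_\eps$. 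Together with the smoothness of $b^\eps$ in $t$ for each fixed $x$, an Arzel\`a--Ascoli-type argument (equicontinuity in $x$ uniform in $t$ upgrades pointwise $t$-continuity to uniform $t$-convergence on compact sets of $\Omega$) delivers joint continuity of $b^\eps$ on $I_\eps\times\Omega$.

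Having checked the hypotheses, Theorem~\ref{thm:mainnew} (when $\vf=\exp^*_{\gamma,\tau}$) or Corollary~\ref{subexpoenergy} (when $\vf=\widetilde{\exp}_{\gamma,\tau}$ under \eqref{vf=Agamma}, a condition manifestly preserved by $t$-mollification) provides, for each $\eps>0$, a sequence $b^\eps_h\in C^\infty(I_\eps\times\Omega;\rem)$ satisfying \eqref{convW^11loc}, \eqref{convL1mainthm}, \eqref{eq5ff48a67} relative to $b^\eps$. Combined with the convergence $b^\eps\to b$ established in the first paragraph, a diagonal extraction $b_h:=b^{\eps_h}_h$ along $\eps_h\downarrow 0$, together with an exhaustion of $I\times\Omega$ by compact subsets of $I_\eps\times\Omega$, yields the required sequence for $b$.

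The delicate step is the verification of joint continuity of $b^\eps$, which is where the time-independence of $w$ is essential: it is precisely this feature that allows the factor $w(x)$ to commute with the $t$-convolution, so that both Jensen's inequality and the spatial Sobolev embedding remain uniformly controlled in $t$ after averaging. Without this structural assumption, neither an $L^1$ dominator for $w\vf(|Db^\eps|)$ nor a uniform spatial modulus of continuity would be available, and one would have to revisit the full proof of Theorem~\ref{thm:mainnew} to find a substitute for the continuity assumption on $b$ (in particular, in the estimate of the term $z_\delta$ mentioned there).
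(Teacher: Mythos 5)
Your proposal is correct and follows essentially the same strategy as the paper: mollify $b$ in time only, use Jensen's inequality together with the time-independence of $w$ to get the energy bound $N_{\vf,w}(|Db^\eps|)\le N_{\vf,w}(|Db|)$, invoke Sobolev embedding to obtain spatial (and then joint) continuity of $b^\eps$, reduce to Theorem~\ref{thm:mainnew}/Corollary~\ref{subexpoenergy}, and diagonalize. The paper handles the domain issue by extending $b$ by zero to $\R\times\Omega$ so that $b^\eps$ is defined on all of $I\times\Omega$, whereas you restrict to $I_\eps$ and exhaust; both are fine. You also spell out more explicitly than the paper why $w\vf(|Db^\eps|)\to w\vf(|Db|)$ in $L^1$ as $\eps\to 0$ (via the mollified dominator), which is needed for the diagonal step and is left implicit in the original.
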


When the function $\vf$ is strictly convex, from the $\vf$-energy convergence of the Jacobian matrices of autonomous vector fields, one can obtain the {\it modular convergence} (see Definition~\ref{def:conv}).

\begin{theorem} \label{thm:main2}
Let $\vf:[0,\infty)\to [0,\infty)$ be a strictly convex function, and assume that
$b_h\to b$ in $L^1_{\rm loc}(\Omega;\rem)$ with
\begin{equation}
\int_\Omega\vf(|Db_h|)\dd x\to \int_\Omega\vf(|Db|)\dd x<\infty\,. \end{equation}
Then
\begin{equation}
\int_\Omega\vf\left(\frac{|Db_h-D b|}{2}\right)\dd x\to 0\,.
\end{equation}
\end{theorem}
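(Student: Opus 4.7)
The strategy is to exploit strict convexity through the non-negative quantity
\[
g_h := \frac{\varphi(|Db_h|) + \varphi(|Db|)}{2} - \varphi\left(\frac{|Db_h - Db|}{2}\right) \ge 0,
\]
where non-negativity uses that $\varphi$ is convex and nondecreasing, so $\varphi(|A-B|/2) \le \varphi((|A|+|B|)/2) \le (\varphi(|A|)+\varphi(|B|))/2$. The energy-convergence hypothesis then rewrites as
\[
\int_\Omega g_h\,\dd x = \int_\Omega \varphi(|Db|)\,\dd x - \int_\Omega \varphi(|Db_h - Db|/2)\,\dd x + o(1),
\]
so the claim is equivalent to $\int g_h \to \int \varphi(|Db|)$. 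The upper bound $\int g_h \le \int \varphi(|Db|) + o(1)$ is trivial, and by Fatou's lemma the matching lower bound reduces to showing $Db_h \to Db$ a.e.\ along a subsequence (whence $g_h \to \varphi(|Db|)$ a.e.).

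To extract such a subsequence I would work with the average $v_h := (Db_h + Db)/2 = D((b_h+b)/2)$. Since $(b_h+b)/2 \to b$ in $L^1_{\rm loc}$, the classical lower semicontinuity of $u \mapsto \int \varphi(|Du|)$ on $W^{1,1}_{\rm loc}$ with respect to $L^1_{\rm loc}$-convergence gives $\liminf_h \int \varphi(|v_h|) \ge \int \varphi(|Db|)$, while the same convexity-monotonicity chain used above yields $\int \varphi(|v_h|) \le \tfrac{1}{2}\int \varphi(|Db_h|) + \tfrac{1}{2}\int \varphi(|Db|) \to \int \varphi(|Db|)$. Hence $\int \varphi(|v_h|) \to \int \varphi(|Db|)$, and the non-negative companion defect
\[
\tilde g_h := \frac{\varphi(|Db_h|) + \varphi(|Db|)}{2} - \varphi(|v_h|)
\]
converges to $0$ in $L^1(\Omega)$, hence, up to a subsequence, a.e.\ on $\Omega$.

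The main obstacle is to promote $\tilde g_h(x) \to 0$ into $Db_h(x) \to Db(x)$. Monotonicity of $\varphi$, together with strict convexity and $\varphi(0)=0$, upgrades strict convexity of $\varphi$ to strict convexity of $\Phi(\xi) := \varphi(|\xi|)$ on $\R^{nm}$; hence any bounded subsequential limit of $Db_h(x)$ must coincide with $Db(x)$. To rule out the escape of $|Db_h(x)|$ to $+\infty$ I would use that $t \mapsto \varphi(t)/2 - \varphi(t/2)$ is nondecreasing (its one-sided derivative is nonnegative by convexity of $\varphi$) and strictly positive on $(0,\infty)$ (by strict convexity and $\varphi(0)=0$), so it is bounded below by $\varphi(1)/2 - \varphi(1/2) > 0$ on $[1,\infty)$. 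This furnishes a uniform positive lower bound for $\tilde g_h(x)$ as soon as $|Db_h(x)|$ is sufficiently large compared with $|Db(x)|$, contradicting $\tilde g_h(x) \to 0$.

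Once $Db_h \to Db$ a.e.\ on a subsequence is secured, $\varphi(|Db_h - Db|/2) \to 0$ a.e.\ and is dominated by $(\varphi(|Db_h|) + \varphi(|Db|))/2$; the latter converges to $\varphi(|Db|)$ both a.e.\ and, by the energy hypothesis, in $L^1$, so the generalized dominated convergence theorem gives $\int \varphi(|Db_h - Db|/2) \to 0$ along that subsequence. Applying this reasoning to arbitrary subsequences yields the conclusion for the full sequence.
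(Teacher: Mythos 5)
Your argument follows the same broad template as the paper's (obtain $Db_h\to Db$ a.e.\ along subsequences, then conclude by Vitali/generalized dominated convergence, using the $\varphi$-energy convergence together with Scheff\'e to build an $L^1$-convergent dominating sequence). The route to the a.e.\ convergence, however, is genuinely different: the paper simply localizes to $A\Subset\Omega$ with negligible boundary, uses lower semicontinuity plus an elementary localization lemma to get $\int_A\varphi(|Db_h|)\to\int_A\varphi(|Db|)$, and then invokes Visintin's theorem \cite{V} to deduce $Db_h\to Db$ strongly in $L^1(A)$; you instead re-derive the pointwise convergence from scratch via the convexity defect $\tilde g_h:=\tfrac12[\varphi(|Db_h|)+\varphi(|Db|)]-\varphi(|v_h|)$, which is a nice self-contained reproof of the relevant piece of Visintin's result.

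There is, however, a concrete gap in your ``escape to infinity'' step. The quantity you control, $h(t):=\varphi(t)/2-\varphi(t/2)$, is the secant gap of $\varphi$ over the interval $[0,t]$, i.e.\ it corresponds to $\eta=Db(x)=0$; but $\tilde g_h(x)$ is the secant gap with one endpoint at $\eta\neq 0$, and the monotonicity of $h$ together with the bound $h(1)>0$ does not transfer to it. In fact, using only monotonicity of $\varphi$ and the triangle inequality one gets
\[
\tilde g_h(x)\ \ge\ \frac{\varphi(|\xi_h|)}{2}-\varphi\!\left(\frac{|\xi_h|+|\eta|}{2}\right),\qquad \xi_h:=Db_h(x),\ \eta:=Db(x),
\]
and the right-hand side is \emph{smaller} than $h(|\xi_h|)$ (since $\tfrac{|\xi_h|+|\eta|}{2}>\tfrac{|\xi_h|}{2}$), and for strictly convex $\varphi$ with linear growth it can even be negative, so no uniform lower bound of the form $\varphi(1)/2-\varphi(1/2)$ follows. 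The correct object to look at is the two-sided secant gap $F(a,b):=\tfrac12[\varphi(a)+\varphi(b)]-\varphi(\tfrac{a+b}{2})$ with $a=|\xi_h|$, $b=|\eta|$; one checks that
\[
F(a,b)=\tfrac12\bigl[\varphi(b+2L)-2\varphi(b+L)+\varphi(b)\bigr],\qquad L:=\tfrac{a-b}{2},
\]
and that this second difference is nondecreasing in $L$ (by convexity) and strictly positive for $L>0$ (by strict convexity). Hence for $|\xi_h|\ge|\eta|+2$ one has $\tilde g_h(x)\ge F(|\xi_h|,|\eta|)\ge\tfrac12[\varphi(|\eta|+2)-2\varphi(|\eta|+1)+\varphi(|\eta|)]>0$, a constant depending on $|Db(x)|$ (finite a.e.), which does rule out escape to infinity. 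Your lemma on $h$ is exactly the case $b=0$; you need its $b$-dependent generalization. Two smaller remarks: the statement as written does not hypothesize $\varphi(0)=0$ or monotonicity, which you (and the paper) use implicitly -- without them $\xi\mapsto\varphi(|\xi|)$ need not be convex; and the lower semicontinuity step, like the paper's appeal to weak $L^1$ convergence of $Db_h$, tacitly uses superlinear growth of $\varphi$ (equi-integrability via de la Vall\'ee-Poussin), which again holds in the paper's intended setting but is not a stated hypothesis.
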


Finally, if $\vf=\,\widetilde \exp_{\gamma,\tau}$, being $\widetilde \exp_{\gamma,\tau}$ a $N$-function (see Lemma~\ref{admisubexpbeh}.(ii)), we can set our result in the classical setting of Orlicz-Sobolev spaces (see Section~\ref{section2}). Therefore, an immediate consequence of  Theorem \ref{subexpoenergywindepx} is the following approximation result in the Orlicz-Sobolev class $W^1 K_{\widetilde \exp_{\gamma,\tau}}(\Omega)$.

\begin{theorem}\label{approxOS} 
Let $\vf=\,\widetilde \exp_{\gamma,\tau}$ be the $N$-function in \eqref{Agammafunction} with $\tau$ given by Lemma~\ref{admisubexpbeh} and $u\in W^1 K_{\widetilde \exp_{\gamma,\tau}}(\Omega)$. 
Suppose that
\begin{equation}
\text{either $|Du|\in L^1(\Omega)$, or $\Omega$ has finite measure\,. }
\end{equation}
Then there exists $(u_h)_h\subset C^\infty(\Omega)\cap W^1 K_{\widetilde \exp_{\gamma,\tau}}(\Omega)$ such that $(u_h)_h$ is mean convergent to $u$ (with respect to the modular $N_{\widetilde \exp_{\gamma,\tau}}$) and $(|Du_h|)_h$ is ${\widetilde \exp_{\gamma,\tau}}$-energy convergent to $|Du|$, that is, 
\begin{equation}
\lim_{h\to\infty}N_{\widetilde \exp_{\gamma,\tau}}\left(u_h-u\right)
=\,0\text{ and }\lim_{h\to\infty}N_{\widetilde \exp_{\gamma,\tau}}\left(|Du_h|\right)
= N_{\widetilde \exp_{\gamma,\tau}}\left(|Du|\right)\,.
\end{equation}
\end{theorem}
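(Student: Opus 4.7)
The plan is to reduce Theorem~\ref{approxOS} to Theorem~\ref{subexpoenergywindepx} via a trivial autonomous lifting. I would fix an interval $I = (0,1)$, take the weight $w \equiv 1$ on $\Omega$, and set $b(t,x) := u(x)$ on $I \times \Omega$. The hypotheses of Theorem~\ref{subexpoenergywindepx} then hold automatically: $b \in L^1_{\rm loc}(I; W^{1,1}_{\rm loc}(\Omega))$ because $u \in W^{1,1}_{\rm loc}(\Omega)$; $N_{\vf,w}(|Db|) = \int_\Omega \widetilde \exp_{\gamma,\tau}(|Du|)\,\dd x < \infty$ by hypothesis; and the two alternatives in \eqref{vf=Agamma}---$w|Db|\in L^1$ or $w\in L^1$---translate exactly into the two alternatives of the present statement. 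Theorem~\ref{subexpoenergywindepx} then provides $b_h \in C^\infty(I \times \Omega)$ satisfying \eqref{convW^11loc}, \eqref{convL1mainthm} and \eqref{eq5ff48a67}, from which I would extract a smooth approximation of $u$ on $\Omega$ by time-averaging:
\[
u_h(x) := \int_0^1 b_h(t,x)\,\dd t \in C^\infty(\Omega).
\]
By Fubini applied to \eqref{convW^11loc}, $u_h \to u$ and $Du_h \to Du$ in $L^1_{\rm loc}(\Omega)$.

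The energy convergence $N_{\widetilde \exp_{\gamma,\tau}}(|Du_h|) \to N_{\widetilde \exp_{\gamma,\tau}}(|Du|)$ will follow routinely. Jensen's inequality for the average over $t$ gives $\widetilde \exp_{\gamma,\tau}(|Du_h(x)|) \le \int_0^1 \widetilde \exp_{\gamma,\tau}(|Db_h(t,x)|)\,\dd t$, so integrating over $\Omega$ and applying \eqref{convL1mainthm} yields $\limsup_h \int_\Omega \widetilde \exp_{\gamma,\tau}(|Du_h|)\,\dd x \le \int_\Omega \widetilde \exp_{\gamma,\tau}(|Du|)\,\dd x$. Fatou's lemma, applied along a subsequence where $Du_h \to Du$ a.e., supplies the reverse $\liminf$ inequality.

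The hard part will be the mean convergence $N_{\widetilde \exp_{\gamma,\tau}}(u_h - u) \to 0$. Jensen again gives $\widetilde \exp_{\gamma,\tau}(|u_h - u|) \le \int_0^1 \widetilde \exp_{\gamma,\tau}(|b_h(t,\cdot) - u|)\,\dd t$, reducing the task to $\int_{I \times \Omega} \widetilde \exp_{\gamma,\tau}(|b_h - b|)\,\dd t\,\dd x \to 0$. This modular control on $b_h - b$ itself, rather than on its spatial derivative, is not among the explicit conclusions of Theorem~\ref{subexpoenergywindepx}, so I would have to extract it from the construction used in its proof. The ingredients I would rely on are: the inclusion $u \in K_{\widetilde \exp_{\gamma,\tau}}$, given for free by $u \in W^1 K_{\widetilde \exp_{\gamma,\tau}}$, which guarantees modular convergence of mollifications on compact subsets of $\Omega$; the locally finite partition of unity in the Meyers-Serrin style construction of $b_h$, together with Jensen's inequality and the weak subadditivity \eqref{growthvf} for $\widetilde \exp_{\gamma,\tau}$ (with $k_\vf = 1$ from Lemma~\ref{admisubexpbeh}), which allows the local modular errors to be summed; and the alternative \eqref{vf=Agamma}, which makes the tail contribution near $\partial \Omega$ or at infinity negligible. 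Producing this modular bound from the construction is where the real work lies; the remaining algebraic steps are formal.
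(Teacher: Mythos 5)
Your proposal follows essentially the same route as the paper's (very terse) proof: apply the time-dependent approximation result with $b(t,x)=u(x)$, $w\equiv 1$, and then upgrade the conclusion to a modular bound on $b_h-b$ itself. The paper invokes Corollary~\ref{subexpoenergy} rather than Theorem~\ref{subexpoenergywindepx} (both apply here, since $u\in W^1K_{\widetilde\exp_{\gamma,\tau}}$ has a more-than-polynomially integrable gradient and is therefore continuous), and then cites Remark~\ref{convNPsi} for precisely the modular bound $\int_I\int_\Omega\widetilde\exp_{\gamma,\tau}(|b_h-b|)\to 0$, which is the step you flag as the hard part and propose to re-derive. So the main thing you missed is that the paper has already isolated this ingredient as a separate remark; the point there is that $\widetilde\exp_{\gamma,\tau}$ has (at most) linear growth at the origin (being an $N$-function), each piece $\zeta_j(\rho_{\eps_j}*b_j)-\zeta_j b$ is supported in $Q_j\Subset Q$ and can be made small simultaneously in $L^\infty$ and $L^1$, and the overlap is at most four, so the local modular errors sum. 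In your plan the role ascribed to $u\in K_\vf$ and to \eqref{vf=Agamma} in controlling the ``tail'' is slightly off — compact supports and the geometric choice $\delta/2^j$ already take care of that, and the linear-growth-at-origin property of the $N$-function is what actually does the work on each piece. On the other hand, your explicit time-averaging step $u_h(x):=\int_0^1 b_h(t,x)\dd t$, with the Jensen--Fubini--Fatou argument for the energy convergence, fills in a point the paper glosses over (the $b_h$ produced by the construction are genuinely $t$-dependent even when $b$ is not), and it is a clean way to do so. Modulo these remarks, the proposal is sound and would yield a complete proof.
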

To our knowledge, the previous result does not seem be a consequence of the well-known results about approximation by smooth functions in Orlicz-Sobolev spaces (see Section~\ref{densityOS}), even in the classical case with $\gamma=0$.

\section{Recalls of  some density results of smooth functions in Orlicz and Orlicz-Sobolev spaces}
\label{section2}
We will quickly recall here the notions of Orlicz and Orlicz-Sobolev spaces and some their main properties. In particular, we will focus on  the main density results of smooth functions in Orlicz and Orlicz-Sobolev spaces. We will mainly use the notation  from~\cite[Ch.~VIII]{A}. 

\subsection{N-functions}\label{Nfunct} 
A function $\vf:[0,\infty)\to[0,\infty)$
is called a {\it $N$-function}, if 
\[
\vf(t):=\,\int_0^t a(s)\,\dd s\text{ if }t\ge\,0,
\]
with $a:\,[0,\infty)\to [0,\infty)$ satisfying:
\begin{itemize}
\item $a(0)=\,0$, $a(t)>\,0$ if $t>\,0$, and $\lim_{t\to\infty}a(t)=\,\infty$;
\item $a$ is nondecreasing, that is, if $t\ge\,s\ge\,0$, then $a(t)\ge\,a(s)$;
\item $a$ is right continuous, that is, if $t\ge\,0$, then $\lim_{s\to t^+}a(s)=\,a(t)$.
\end{itemize}

Given a $N$-function $\vf$ and $\lambda>\,0$, we denote by $\vf_\lambda:[0,\infty)\to [0,\infty)$ the function
\[
\vf_\lambda(t):=\,\vf\left(\frac{t}{\lambda}\right)\text{ if }t\ge\,0\,,
\]
which is still a $N$-function.

A function  $\vf$ is said to satisfy   a {\it global $\Delta_2$-condition }  if there exists $k>\,0$ such that
\[
\vf(2t)\le\,k\,\vf(t)\text{ for each }t\ge\,0\,.
\]
A function  $\vf$ is said to satisfy   a {\it  $\Delta_2$-condition near infinity } if there exist $k,t_0>\,0$ such that
\[
\vf(2t)\le\,k\,\vf(t)\text{ for each }t\ge\,t_0\,.
\]

\begin{remark}\label{convdoublvf} 
Observe that a convex function $\vf$ satisfying a global $\Delta_2$-condition trivially fulfills condition \eqref{growthvf}. 
Indeed, by the convexity and $\Delta_2$-condition, we can get the following estimate
\[
\vf(a+b)\le\,\frac{1}{2}\vf(2a)+ \frac{1}{2}\vf(2b)\le\,\frac{k}{2}\left(\vf(a)+\vf(b)\right)\text{ for each }a,\,b\in\R\,.
\]
\end{remark}

Given $\Omega\subset\rn$ and a N-function $\vf$ , a pair $(\vf,\Omega)$ is said to be  {\it $\Delta$-regular} if
\begin{itemize}
\item $\vf$ satisfies a {global $\Delta_2$-condition}, 

\noindent or

\item $\vf$ satisfies a {$\Delta_2$-condition near infinity}  and $\Omega$ has finite measure.
\end{itemize}

\subsection{The Orlicz class \texorpdfstring{$K_\vf(\Omega)$}{Kphi(Omega)}}

Let $\Omega\subset\rn$ be an open set  and let $\vf$ be  a $N$-function. The {\it Orlicz class } $K_\vf(\Omega)$ is the set of all (equivalence classes modulo equality a.e.~on $\Omega$ of) measurable functions $u:\Omega\to\R$ such that
\[
N_\vf(u):=\,\int_\Omega \vf(|u(x)|)\,\dd x<\,\infty\,.
\]
In the theory of modular spaces, the map $u\mapsto N_\vf(u)$ is called a {\it modular} (\cite[pg. 82]{RR}). 
A comprehensive account of modular function spaces  can be found in \cite{K}. 
We treat the case of real-valued functions for simplicity, but
all results have an obvious extension to the case of $\R^m$-valued maps.

Let us recall some properties of the Orlicz class $K_\vf(\Omega)$.

\begin{proposition}\label{propKA}
Given an open set $\Omega\subset\rn$ and a $N$-function $\vf$, 
the following statements hold:
\begin{itemize}
\item [(i)] $K_\vf(\Omega)$ is a convex set of measurable functions.
\item [(ii)] $K_{\vf_\lambda}(\Omega)\supseteq K_\vf(\Omega)$ if $\lambda\ge\,1$ and $K_{\vf_\lambda}(\Omega)\subseteq K_\vf(\Omega)$ if $\lambda\le\,1$, 
where $\vf_\lambda(t):=\vf(t/\lambda)$ is a $N$-function for all $\lambda>0$.
\item [(iii)] If $f,\,g\in K_{\vf}(\Omega)$, then $f+g\in  K_{\vf_2}(\Omega)$ and
\[
N_{\vf_2}(f+g)\le\, \frac{1}{2} N_\vf(f)+\frac{1}{2} N_\vf(g)\,.
\]
\item [(iv)] If $f\in K_{\vf}(\Omega)$ and $\lambda>\,0$, then $\lambda f\in K_{\vf_\lambda}(\Omega)$.
\item [(v)]  
If $\Omega$ has finite measure,
then 
\[
L^\infty(\Omega)\subset K_\vf(\Omega) \subsetneq L^1(\Omega) .
\]
\item[(vi)]
If $\Omega$ has finite measure,
then for every $u\in L^1(\Omega)$ there is a $N$-function $\vf$ such that $u\in K_\vf(\Omega)$.
\end{itemize}
\end{proposition}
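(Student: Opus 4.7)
My plan is to dispatch (i)--(iv) by direct estimates leveraging convexity and monotonicity of $\vf$, address (v) via the superlinear growth of $N$-functions, and invoke a de la Vallée-Poussin type construction for (vi).

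For (i), given $u,v\in K_\vf(\Omega)$ and $t\in[0,1]$, monotonicity and convexity of $\vf$ yield
\[
\vf(|tu+(1-t)v|)\le\vf(t|u|+(1-t)|v|)\le t\vf(|u|)+(1-t)\vf(|v|),
\]
and integration proves $K_\vf(\Omega)$ is convex. For (ii), the function $\vf_\lambda(s)=\vf(s/\lambda)$ is the primitive of $s\mapsto a(s/\lambda)/\lambda$, which inherits the three defining properties of an $N$-function from $a$; the claimed inclusions then follow from $\vf_\lambda\le\vf$ when $\lambda\ge 1$ and the reverse inequality when $\lambda\le 1$, both by monotonicity of $\vf$. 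For (iii), combine $|f+g|/2\le(|f|+|g|)/2$ with convexity and monotonicity to bound $\vf(|f+g|/2)\le\tfrac12\vf(|f|)+\tfrac12\vf(|g|)$ pointwise, then integrate. Part (iv) is a direct change of variable inside $\vf$.

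For (v), the embedding $L^\infty(\Omega)\subset K_\vf(\Omega)$ follows from $\vf(|u|)\le\vf(\|u\|_\infty)$ and $|\Omega|<\infty$. For $K_\vf(\Omega)\subseteq L^1(\Omega)$ I would exploit the superlinearity $\vf(t)/t\to\infty$, itself a consequence of $\vf(t)\ge a(t/2)\cdot(t/2)$ and $a(t)\to\infty$: choose $M$ with $t\le\vf(t)$ for $t\ge M$ and split
\[
\int_\Omega|u|\dd x\le M|\Omega|+\int_\Omega\vf(|u|)\dd x<\infty.
\]
Strictness amounts to exhibiting a function in $L^1(\Omega)\setminus K_\vf(\Omega)$: by superlinearity of $\vf$ pick $t_k\uparrow\infty$ with $\vf(t_k)\ge 2^k k\,t_k$, choose disjoint sets $A_k\subset\Omega$ with $|A_k|=2^{-k}/t_k$ (possible because $\Omega$ has positive measure), and take $u=\sum_k t_k\chi_{A_k}$, so that $\|u\|_1\le\sum 2^{-k}<\infty$ while $N_\vf(u)\ge\sum k=\infty$.

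For (vi), the layer-cake formula rewrites
\[
\int_\Omega\vf(|u|)\dd x=\int_0^\infty a(t)\,|\{|u|>t\}|\dd t,
\]
so the task reduces to producing a nondecreasing, right-continuous $a$ with $a(0)=0$ and $a(t)\to\infty$ such that $a\cdot\lambda\in L^1([0,\infty))$, where $\lambda(t):=|\{|u|>t\}|$. Since $u\in L^1(\Omega)$ forces $\lambda\in L^1([0,\infty))$, this reduces to the classical fact that any nonnegative $L^1$ function on the half line can be multiplied by an unbounded nondecreasing weight while remaining integrable: concretely, pick $s_k\uparrow\infty$ with $\int_{s_k}^\infty\lambda\le 4^{-k}$ and let $a$ be the step function equal to $k$ on $[s_k,s_{k+1})$, after a small modification near $0$ to enforce $a(0)=0$ and $a(t)>0$ for $t>0$. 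This last construction, calibrating the growth of $a$ against the decay of the distribution function of $u$, is the main technical point of the proposition; the remaining parts are essentially bookkeeping with the convexity and monotonicity of $\vf$.
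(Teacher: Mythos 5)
Your proof of (i)--(iv) is exactly the ``immediate from convexity and the definition of $K_\vf(\Omega)$'' that the paper asserts; the paper gives no further detail, so you match it. For (v) and (vi) the paper simply cites Krasnosel'skii--Rutickii, whereas you supply the standard self-contained arguments: superlinearity of an $N$-function (from $\vf(t)\ge a(t/2)\,t/2$) for the inclusion $K_\vf\subseteq L^1$, an explicit disjoint-plateau construction for strictness, and the layer-cake identity $N_\vf(u)=\int_0^\infty a(t)\,|\{|u|>t\}|\,\dd t$ plus a de la Vall\'ee-Poussin--type calibration of $a$ against the distribution function for (vi). This is not a different route in substance (it is essentially what [KR] does), but it has the virtue of making the proposition self-contained; the trade-off is length.

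One bookkeeping point in your strictness construction in (v): you need $\sum_k |A_k|\le |\Omega|$. With $t_k\ge 1$ you get $\sum_k 2^{-k}/t_k<1$, which suffices only if $|\Omega|\ge 1$; for small $|\Omega|$ replace $|A_k|=2^{-k}/t_k$ by $|A_k|=c\,2^{-k}/t_k$ with $c\le|\Omega|$, which leaves both estimates intact ($\|u\|_1\le c$ and $N_\vf(u)\ge c\sum_k k=\infty$). Also, in (vi) the ``small modification near $0$'' should be spelled out if you want $a$ to genuinely satisfy $a(0)=0$, $a(t)>0$ for $t>0$: e.g.\ replace your step function $a$ by $\max\{a(t),\min(t,1)\}$; since this adds a bounded function and $\lambda\in L^1$, integrability of $a\lambda$ is unaffected. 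With these two trivial patches the argument is complete and correct.
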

\begin{proof} Properties (i), (ii), (iii) and (iv) are immediate consequences of the definition of $K_\vf(\Omega)$ and the convexity of $\vf$.
For the proof of properties (v) and (vi) see, for instance,  \cite{KR}.
\end{proof}

\begin{lemma}[{\cite[Lem.~8.8]{A} or \cite[Ch.~III, Th.~8.2]{KR}}]\label{KAsv}
$K_\vf(\Omega)$ is a vector space if and only if   $(\vf,\Omega)$ is {\it $\Delta$-regular} .
\end{lemma}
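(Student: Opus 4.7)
My plan is to reduce the vector-space property to closure under multiplication by $2$, and then to prove the two implications separately. Since $K_\vf(\Omega)$ is a convex set containing $0$ by Proposition~\ref{propKA}(i), it is a vector space if and only if $u\in K_\vf(\Omega)\Rightarrow 2u\in K_\vf(\Omega)$: iteration plus monotonicity of $\vf$ then gives $\lambda u\in K_\vf(\Omega)$ for all $\lambda>0$, and $u+v=2\cdot\tfrac{1}{2}(u+v)$ combined with convexity yields closure under addition. Hence the lemma reduces to characterizing when this doubling implication holds.

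For the sufficiency, if $\vf$ satisfies the global $\Delta_2$ condition with constant $k$, the pointwise inequality $\vf(2|u|)\le k\vf(|u|)$ integrates to $N_\vf(2u)\le kN_\vf(u)<\infty$. In the finite-measure case with $\Delta_2$ only for $t\ge t_0$, I split $\Omega=E\cup F$ into $\{|u|\le t_0\}$ and $\{|u|>t_0\}$: on $E$ one has $\int_E\vf(2|u|)\dd x\le \vf(2t_0)|\Omega|<\infty$, which is precisely where finiteness of $|\Omega|$ is used, and on $F$ the doubling condition yields $\int_F\vf(2|u|)\dd x\le k\,N_\vf(u)<\infty$.

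For the necessity, assuming $(\vf,\Omega)$ is not $\Delta$-regular, I produce $u\in K_\vf(\Omega)$ with $N_\vf(2u)=\infty$ of the form $u=\sum_n t_n\chi_{A_n}$, with pairwise disjoint measurable $A_n\subset\Omega$ of measures $c_n$. The essential input is a sequence $(t_n)$ of positive reals with $\vf(2t_n)/\vf(t_n)\to\infty$; after extraction, $\vf(2t_n)\ge 2^n\vf(t_n)$. Setting $c_n=\tfrac{1}{2}\min(|\Omega|,1)\cdot 2^{-n}/\vf(t_n)$ in the finite-measure case yields $\sum c_n\vf(t_n)<\infty$, $\sum c_n\vf(2t_n)=\infty$, and $\sum c_n\le|\Omega|$, so disjoint $A_n\subset\Omega$ of those measures can be chosen; in the infinite-measure case the budget on $\sum c_n$ is unconstrained and the same recipe works. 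The sequence $(t_n)$ is produced by case analysis on the failure of $\Delta$-regularity: if $|\Omega|<\infty$ then $\vf$ must fail $\Delta_2$ near infinity, yielding $t_n\to\infty$; if $|\Omega|=\infty$, either $\vf$ fails $\Delta_2$ near infinity, or it holds near infinity but not globally. The main obstacle is this last branch, where the bad points lie in a bounded interval $(0,t_0]$: I must argue that any convergent subsequence tends to $0$, using that convexity of $\vf$ forces continuity on $(0,\infty)$, so a positive limit $t^*$ would give $\vf(2t_n)/\vf(t_n)\to \vf(2t^*)/\vf(t^*)<\infty$, contradicting the blow-up of the ratio.
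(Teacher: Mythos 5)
The paper does not prove this lemma: it is stated with a citation to \cite[Lem.~8.8]{A} and \cite[Ch.~III, Th.~8.2]{KR}, so there is no paper proof to compare against, and I can only assess your argument on its own terms. It is correct, and it is essentially the classical argument found in the cited references. A few remarks on small points worth tightening. In the reduction step, you should also note that $0\in K_\vf(\Omega)$ (from $\vf(0)=0$, even when $|\Omega|=\infty$) and that closure under sign change is free since $\vf(|-u|)=\vf(|u|)$, so that the doubling implication really is the only thing left to check. In the finite-measure branch of the necessity, the bound $\sum c_n\le|\Omega|$ with your choice $c_n=\tfrac12\min(|\Omega|,1)\cdot 2^{-n}/\vf(t_n)$ requires a further extraction ensuring $\vf(t_n)\ge 1$, which is available since $t_n\to\infty$; without it, $\sum 2^{-n}/\vf(t_n)$ need not be bounded by $2$. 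Your handling of the delicate subcase ($|\Omega|=\infty$, $\Delta_2$ near infinity holds but the global $\Delta_2$ fails) is the right idea: the witnesses are trapped in a bounded interval, and continuity of $\vf$ on $(0,\infty)$ together with $\vf>0$ there forces them to accumulate only at $0$; since $|\Omega|=\infty$ there is then no budget constraint on $\sum c_n$, so $\vf(t_n)\to 0$ causes no trouble. Finally, the existence of pairwise disjoint measurable $A_n\subset\Omega$ of the prescribed measures uses non-atomicity of Lebesgue measure on the open set $\Omega$, which you should state rather than leave implicit.
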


\subsection{The Orlicz space \texorpdfstring{$L_\vf(\Omega)$}{Lphi(Omega)}}

The {\it  Orlicz space} $L_\vf(\Omega)$ is defined to be the linear hull of the Orlicz class  $K_\vf(\Omega)$, that is   the smallest vector subspace of $L^1_{\rm loc}(\Omega)$ containing $K_\vf(\Omega)$. It is easy to see that, since $K_\vf(\Omega)$ is convex, one has
\[
L_\vf(\Omega):=\,\left\{\lambda\,u:\,\lambda\in\R,\,u\in K_\vf(\Omega)\right\}\,.
\]
Moreover, from Lemma \ref{KAsv}, $K_\vf(\Omega)=L_\vf(\Omega)$ if and only if $(\vf,\Omega)$ is {\it $\Delta$-regular}.

We can endow $L_\vf(\Omega)$ with the following norm, called {\it Luxemburg norm},
\[
\|u\|_\vf=\, \|u\|_{\vf,\Omega}:=\,\inf\left\{\lambda>\,0:\,\int_\Omega \vf\left(\frac{|u(x)|}{\lambda}\right)\,\dd x\le\,1\right\}\,.
\]
\begin{theorem}[{\cite[Thm.~8.10]{A}}]
 $(L_\vf(\Omega),\|\cdot\|_\vf)$ is a Banach space.
\end{theorem}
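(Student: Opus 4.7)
The plan is first to verify that $\|\cdot\|_\vf$ is indeed a norm on $L_\vf(\Omega)$ (using convexity of $\vf$ for the triangle inequality), and then to establish completeness via the classical subsequence argument: extract a subsequence whose increments have dyadically small Luxemburg norm, control the resulting telescoping series through the modular, and recover the limit via Fatou's lemma. The pivotal technical tool, used repeatedly, is the equivalence
\[
\|u\|_\vf\le \alpha \quad\Longleftrightarrow\quad \int_\Omega \vf\bigl(|u(x)|/\alpha\bigr)\dd x \le 1 \qquad (\alpha>0),
\]
whose non-trivial direction ``$\Rightarrow$'' follows by monotone convergence applied to the non-increasing map $\lambda\mapsto \int_\Omega \vf(|u|/\lambda)\dd x$ as $\lambda\downarrow \alpha$. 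That $\|u\|_\vf<\infty$ for every $u\in L_\vf(\Omega)$ follows by writing $u=\mu v$ with $v\in K_\vf(\Omega)$ and using dominated convergence to see that $\int_\Omega \vf(|u|/\lambda)\dd x\to 0$ as $\lambda\to\infty$.

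\emph{Norm axioms.} Homogeneity is immediate from the substitution $\lambda\to |c|\lambda$ in the defining infimum. Definiteness ($\|u\|_\vf=0 \Rightarrow u=0$ a.e.) uses that, on $\{|u|>0\}$, $\vf(|u|/\lambda)\uparrow +\infty$ as $\lambda\downarrow 0$ by divergence of $\vf$ at infinity, contradicting $\int_\Omega \vf(|u|/\lambda)\dd x\le 1$ for all $\lambda>0$. For the triangle inequality, given $\alpha>\|u\|_\vf$ and $\beta>\|v\|_\vf$, the pointwise bound
\[
\frac{|u+v|}{\alpha+\beta}\le \frac{\alpha}{\alpha+\beta}\cdot\frac{|u|}{\alpha} + \frac{\beta}{\alpha+\beta}\cdot\frac{|v|}{\beta},
\]
combined with convexity and monotonicity of $\vf$, yields after integration $\int_\Omega \vf(|u+v|/(\alpha+\beta))\dd x\le 1$; hence $\|u+v\|_\vf\le \alpha+\beta$, and taking infima over admissible $\alpha,\beta$ gives $\|u+v\|_\vf\le \|u\|_\vf+\|v\|_\vf$.

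\emph{Completeness.} Let $(u_n)$ be Cauchy in $\|\cdot\|_\vf$ and extract a subsequence with $\|u_{n_{k+1}}-u_{n_k}\|_\vf\le 2^{-k}$. Setting $g_K:=\sum_{k=1}^K |u_{n_{k+1}}-u_{n_k}|$, the triangle inequality gives $\|g_K\|_\vf<1$, so $\int_\Omega \vf(g_K)\dd x\le 1$. By monotone convergence the pointwise limit $g:=\sup_K g_K$ satisfies $\int_\Omega \vf(g)\dd x\le 1$, whence $g<\infty$ a.e.\ (again using that $\vf$ diverges at infinity); consequently $u:=\lim_k u_{n_k}$ exists a.e.\ as a measurable function with $|u-u_{n_1}|\le g$, and combined with $u_{n_1}\in L_\vf(\Omega)$ this gives $u\in L_\vf(\Omega)$. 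To prove $\|u_n-u\|_\vf\to 0$, fix $\eps>0$ and choose $N$ with $\|u_n-u_m\|_\vf\le\eps$ for $n,m\ge N$: for $n\ge N$, Fatou's lemma applied to $\vf(|u_n-u_{n_k}|/\eps)$ as $k\to\infty$ yields $\int_\Omega \vf(|u_n-u|/\eps)\dd x\le 1$, i.e.~$\|u_n-u\|_\vf\le\eps$.

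The delicate points are the bidirectional passage between the Luxemburg norm and the modular (which converts the norm-Cauchy condition into the integrable bound $\int_\Omega \vf(g_K)\dd x\le 1$ on the telescoping partial sums), and the Fatou step that transfers the quantitative norm estimate from the subsequence to the a.e.~limit. Both rely crucially on monotonicity, convexity and superlinearity at infinity of the $N$-function $\vf$; notably, no $\Delta_2$-condition is needed, since the argument bypasses the pathologies distinguishing $K_\vf(\Omega)$ from its linear hull $L_\vf(\Omega)$.
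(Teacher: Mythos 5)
The paper does not prove this statement—it is quoted directly from Adams \cite[Thm.~8.10]{A}—and your argument is precisely the standard one found there (and in \cite{KR,RR}): verify the Luxemburg norm axioms via convexity, convert norm bounds into modular bounds through the equivalence $\|u\|_\vf\le\alpha\IFF\int_\Omega\vf(|u|/\alpha)\dd x\le 1$, and run the telescoping-series/Fatou completeness argument. Your proof is correct as written; the only step left slightly implicit is that $|u-u_{n_1}|\le g$ with $g\in K_\vf(\Omega)$ gives $u-u_{n_1}\in K_\vf(\Omega)$ because $K_\vf(\Omega)$ is closed under pointwise domination, whence $u\in L_\vf(\Omega)$.
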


\subsection{Convergences in \texorpdfstring{$L_\vf(\Omega)$}{Lphi(Omega)}}
The typical convergences that apply in Orlicz spaces are the following.

\begin{definition}\label{def:conv}
		A sequence of functions $(u_h)_h\subset L_\vf(\Omega)$  is said to be {\it norm convergent}  to $u\in L_\vf(\Omega)$ if
		\[
		\|u_h-u\|_\vf\to 0\text{ as }h\to\infty\,.
		\]
		A sequence of functions $(u_h)_h\subset L_\vf(\Omega)$  is said to be {\it modular convergent}  to $u\in L_\vf(\Omega)$ if there exists $\lambda>\,0$  such that
		\begin{equation}\label{modconv}
		N_\vf\left(\frac{u_h-u}{\lambda}\right)\to 0\text{ as }h\to\infty.
		\end{equation}
		If $\lambda=1$ in \eqref{modconv}, $(u_h)_h$  is said to be {\it mean convergent}  to $u\in L_\vf(\Omega)$. 
		A sequence of functions $(u_h)_h\subset K_\vf(\Omega)$  is said to be {\it $\vf$-energy convergent}  to $u\in K_\vf(\Omega)$ if
		\begin{equation}\label{strictconv}
		N_\vf(u_h)\to  N_\vf(u)\text{ as }h\to\infty.
		\end{equation}
\end{definition}

Norm and modular convergences are  classical in the theory of 
Orlicz spaces  (see, for instance, \cite{A,KR}).   We do not know whether the $\vf$-energy convergence has been already named in the literature.  

The following implications between norm, mean, modular and $\vf$-energy convergence hold.
\begin{proposition}\label{comparconv} Let $(u_h)_h$ and $u$ be in $ L_\vf(\Omega)$.
\begin{itemize}
\item[(i)] Suppose that $(u_h)_h$ is norm convergent to  $u$. Then it is also mean convergent. The converse implication in general
does not hold. It holds if $(\vf,\Omega)$ is {\it $\Delta$-regular}.
\item[(ii)] Suppose that $(\vf,\Omega)$ is {\it $\Delta$-regular}, 
$\vf$ is strictly convex,
$u_h\to u$ a.e.~in $\Omega$ and  $(u_h)_h$ is $\vf$-energy convergent to~$u$.
Then $(u_h)_h$ is norm convergent to  $u$.
\item[(iii)] $(u_h)_h$ is norm convergent to  $u$ if and only if, for each $\lambda>\,0$,
\[
N_\vf\left(\frac{u_h-u}{\lambda}\right)\to 0\text{ as }h\to\infty\,.
\]
\item[(iv)] 
Suppose that $(\vf,\Omega)$ is {\it $\Delta$-regular} and $(u_h)_h\subset K_\vf(\Omega)$ is mean convergent to $u\in K_\vf(\Omega)$. Then $(u_h)_h$ is $\vf$-energy convergent to $u$.
\item[(v)] 
Suppose that  $(2u_h)_h\subset K_{\vf}(\Omega)$ is mean convergent to $2u\in K_{\vf}(\Omega)$ (with respect to the modular $N_{\vf}$). Then  $(u_h)_h\subset K_{\vf}(\Omega)$,  $u\in K_{\vf}(\Omega)$ and  $(u_h)_h$ is $\vf$-energy convergent to $u$.
\item[(vi)] 
Suppose that  $2u\in K_\vf(\Omega)$ and $(u_h)_h$ is norm convergent to  $u$. Then $u_h\in K_\vf(\Omega)$ for $h$ large and    $(u_h)$  is also $\vf$-energy convergent to $u$.
\end{itemize}
\end{proposition}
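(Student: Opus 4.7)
The plan is to prove all six items by exploiting two elementary tools: the convexity bound $\vf(\lambda t) \le \lambda \vf(t)$ for $\lambda \in [0,1]$ (coming from convexity and $\vf(0) = 0$), and the iterated $\Delta_2$-condition $\vf(2^m t) \le k^m \vf(t)$, which lets one replace any fixed scaling in $N_\vf(\cdot/\lambda)$ by any other at the cost of a multiplicative constant. The Luxemburg norm is tailored exactly to the modular $N_\vf$, so all conversions between norm, modular, mean and $\vf$-energy convergence will follow by appropriate choices of the scaling parameter.

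I would start with (iii), from which (i) is a corollary. For the forward direction, if $\|u_h - u\|_\vf \to 0$ and $\lambda > 0$, then for $h$ large one can pick $\mu_h \in (\|u_h - u\|_\vf, \lambda)$ with $\mu_h \to 0$; by the convexity bound applied to $\mu_h/\lambda \in (0,1)$,
\[
N_\vf\!\left(\frac{u_h - u}{\lambda}\right) \le \frac{\mu_h}{\lambda} \int_\Omega \vf\!\left(\frac{|u_h - u|}{\mu_h}\right) \dd x \le \frac{\mu_h}{\lambda} \to 0.
\]
Conversely, if the modulars vanish at every $\lambda$, then choosing $\lambda = \eps$ forces $\|u_h - u\|_\vf \le \eps$ eventually. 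Setting $\lambda = 1$ in (iii) yields the first half of (i). For its $\Delta$-regular converse, iterate $\vf(2t) \le k \vf(t)$ (absorbing the $|\Omega|$-proportional residue on $\{|\cdot| \le t_0\}$ in the near-infinity case) to obtain $\vf(t/\lambda) \le k^m \vf(t) + C_m$ whenever $2^m \ge 1/\lambda$, so $N_\vf((u_h - u)/\lambda) \to 0$ for every $\lambda$, and (iii) closes the loop.

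The statements (iv), (v), (vi) share the common engine
\[
\vf(|u_h|) \le \vf(|u| + |u_h - u|) \le \tfrac{1}{2}\vf(2|u|) + \tfrac{1}{2}\vf(2|u_h - u|),
\]
in which $\vf(2|u|)$ is integrable either by $\Delta_2$ (case (iv)) or by the hypothesis $2u \in K_\vf$ (cases (v), (vi)), while $\vf(2|u_h - u|)$ has vanishing integral either directly (case (v)), via (iii) with $\lambda = 1/2$ (case (vi)), or via $\Delta_2$ from $\vf(|u_h - u|)$ (case (iv)). Since mean convergence implies convergence in measure (from $\vf(\eps)|\{|u_h - u| > \eps\}| \le N_\vf(u_h - u)$), any subsequence admits a further a.e.\ convergent one along which $\vf(|u_h|) \to \vf(|u|)$ pointwise. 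The display then provides an $L^1$-convergent dominator, so a generalized dominated convergence argument gives $N_\vf(u_h) \to N_\vf(u)$ along the sub-subsequence, and the standard sub-sub-sequence trick extends this to the full sequence. Case (vi) additionally reads $u_h \in K_\vf$ eventually off the finiteness of the dominator.

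Finally, for (ii), the input is a.e.\ convergence $u_h \to u$ plus $\int \vf(|u_h|) \to \int \vf(|u|) < \infty$. Continuity of $\vf$ gives $\vf(|u_h|) \to \vf(|u|)$ a.e., and Scheff\'e's lemma promotes this to $\vf(|u_h|) \to \vf(|u|)$ in $L^1(\Omega)$. The convex split $\vf(|u_h - u|) \le \tfrac12 \vf(2|u_h|) + \tfrac12 \vf(2|u|)$ combined with $\Delta_2$ produces an $L^1$-convergent dominator for the a.e.\ null sequence $\vf(|u_h - u|)$, hence mean convergence, hence norm convergence via (i). The main obstacle I foresee is careful bookkeeping in the $\Delta_2$-near-infinity case: every doubling step leaves a residue controlled by $\vf(t_0)|\Omega|$, and each $\Delta_2$ invocation should be preceded by a splitting $\vf(|u|) = \vf(|u|)\mathbf{1}_{\{|u| \le t_0\}} + \vf(|u|)\mathbf{1}_{\{|u| > t_0\}}$ with the small part absorbed against $\vf(t_0)|\Omega|$. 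I also note that strict convexity, assumed in (ii), does not appear to be used by the route sketched above; the authors may prefer a Visintin-type argument exploiting the equality case in Jensen's inequality, where strict convexity would enter essentially, but the Scheff\'e route suffices as a plan.
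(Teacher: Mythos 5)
Your proof is correct, and for items (iv)--(vi) it follows essentially the same route as the paper: the convex split $\vf(|u_h|)\le\tfrac12\vf(2|u_h-u|)+\tfrac12\vf(2|u|)$, extraction of an a.e.\ convergent subsequence, and Vitali/generalized dominated convergence, with the sub-subsequence trick to pass to the full sequence (the paper leaves this last step implicit). The paper, however, simply cites items (i)--(iii) to Rao--Ren and Ahmida--Fiorenza--Youssfi, whereas you supply direct arguments; your proof of (iii) via the convexity bound $\vf(\alpha t)\le\alpha\vf(t)$ for $\alpha\in[0,1]$ and the definition of the Luxemburg norm is clean and standard, and your Scheff\'e route for (ii) is a natural alternative. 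The one place that needs more care than your sketch suggests is the $\Delta_2$-near-infinity case in (i), (ii) and (iv): the iterated estimate $\vf(2^m t)\le k^m\vf(t)+C_m$ produces an additive constant whose integral over $\Omega$ does \emph{not} vanish, so you cannot conclude directly; you must instead split $\Omega$ on the level sets $\{|u_h-u|\le\eta\}\cup\{\eta<|u_h-u|\le t_0\}\cup\{|u_h-u|>t_0\}$, use $|\Omega|<\infty$ to make the first piece small, use convergence in measure (from $\vf(\eta)\,|\{|u_h-u|>\eta\}|\le N_\vf(u_h-u)\to 0$) to kill the second, and apply $\Delta_2$ on the third. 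You flag this obstacle, so it is not a gap, but the specific description ``absorbed against $\vf(t_0)|\Omega|$'' is slightly off: the absorption comes from convergence in measure, not from boundedness of the residue. Finally, your observation that strict convexity is never used in (ii) is a genuine point: given the a.e.\ convergence hypothesis, Scheff\'e plus $\Delta_2$ already suffices. Strict convexity is what would be needed, in the manner of Theorem~\ref{thm:main2} and Visintin's argument, to \emph{derive} a.e.\ (strong) convergence from weak convergence plus energy convergence; since the proposition assumes a.e.\ convergence outright, that hypothesis is redundant here and is carried along only to match the cited reference.
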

\begin{proof}
 (i) and (ii) are proven in \cite[Chap.~III, Sect.~3.4, Thm.~12]{RR}.
 The proof of (iii) is somehow elementary, see for instance \cite[Lem.~2.7]{AFY} and \cite[pg.~4]{K}.

We prove (iv) when $\vf$ satisfies a global $\Delta_2$-condition:
in the other case, when $\Omega$ has finite measure and the $\vf$ satisfies a $\Delta_2$-condition near infinity, has a similar proof.
From the assumptions, the sequence $(\vf(|u_h-u|))_h\subset L^1(\Omega)$ converges in $L^1(\Omega)$ to $0$.
 Thus, up to a subsequence, we can assume that
\begin{equation*}
\vf(|u_h-u|)\to 0\text{ a.e. in } \Omega,\text{ as $h\to\infty$}\,.
\end{equation*}
Since $\vf$ is a $N$-function, then $\vf:\,[0,\infty)\to [0,\infty)$ is bijective and $\vf^{-1}:\,[0,\infty)\to [0,\infty)$ is still continuous.
Thus, we also get that
\begin{equation}\label{pointconv}
|u_h-u|=\,\vf^{-1}(\vf(|u_h-u|))\to \vf^{-1}(\vf(0))=\,0\text{ a.e.~in } \Omega,\text{ as $h\to\infty$}\,.
\end{equation}
From the convexity  of $\vf$ and the global $\Delta_2$-condition of $\vf$, it follows that
\begin{equation}\label{domconv2}
\begin{aligned}
\vf(|u_h|)
&\le\,\frac{1}{2}\vf(2|u_h-u|)+ \frac{1}{2}\vf(2|u|)\,\\
&\le\,\frac{k}{2}\left(\vf(|u_h-u|)+\vf(|u|)\right) .
\end{aligned}
\end{equation}
By \eqref{pointconv} and \eqref{domconv2}, we can apply Vitali's convergence theorem and then
\[
\vf(|u_h|)\to \vf(|u|)\text{ in }L^1(\Omega)\text{, as }h\to \infty\,.
\]
Thus \eqref{strictconv} follows.

For (v), we get at once that $(u_h)_h\subset K_{\vf}(\Omega)$ and   $u\in K_{\vf}(\Omega)$, because $\vf$ is increasing.
We can show \eqref{pointconv} as in the proof of claim (iv),
and the convexity of $\vf$ implies
\[
\vf(|u_h|)
\le\,\frac{1}{2}\vf(2|u_h-u|)+ \frac{1}{2}\vf(2|u|)
\]
Thus, applying Vitali's convergence theorem, we still get \eqref{strictconv}.

Finally, we prove (vi).
From the norm convergence and (iii), we can infer that, up to a subsequence,
\[
\vf(|u_h-u|)\to 0\text{  a.e. in }\Omega\text{, as }h\to\infty\,,
\]
and
\[
\vf(2|u_h-u|)\to 0\text{ in } L^1(\Omega)\text{, as }h\to\infty\,.
\]
We can show again \eqref{pointconv} as in claim (iv) and 
$\vf(|u_h|)\le\frac{1}{2}\vf(2|u_h-u|)+ \frac{1}{2}\vf(2|u|)$
from the convexity of $\vf$.
 Thus, applying Vitali's convergence theorem, we get~\eqref{strictconv}.
\end{proof}

\begin{ex}\label{ex}
From items (iv) and (v) of Proposition~\ref{comparconv}, 
one could get the wrong impression that mean convergence implies $\vf$-energy convergence.
We show that this is not the case if $(\vf,\Omega)$ is not
$\Delta$-regular:
for $\Omega=\,(0,1)$ and $\vf=\,\widetilde\exp_0$ (cf.~\eqref{Agammafunction}), we give a sequence of functions $u_h\in K_\vf((0,1))$ that is mean convergent to $u\in K_\vf((0,1))$, but that is not $\vf$-energy convergent.

Let $f_h,f:(0,1)\to\R$ be the functions
\[
\displaystyle{
f_h(x) := 
\begin{cases}
\frac{2\sqrt h}{\log h}&\text{ if }0<\,x<\,\frac{1}{h}\\
\frac{1}{\log h}\frac{1}{\sqrt x}&\text{ if }\frac{1}{h}\le\,x<\,1
\end{cases}
},\quad f(x) :=\frac1{\sqrt{x}} .
\]
Direct computations show that
\begin{align*}
\int_0^1 f(x) \dd x &= 2 , &
\int_0^1 \log(f(x)) \dd x &=  \frac12 , \\
\int_0^1 f_h(x) \dd x &\overset{h\to\infty}{\longrightarrow}0 , &
\int_0^1 \log(1+f_h) \dd x &\overset{h\to\infty}{\longrightarrow}0 , \\
\int_0^1 f(x)\,f_h(x) \dd x &= \frac{4}{\log(h)} + 1 \overset{h\to\infty}{\longrightarrow}1 .\\
\end{align*}
Define
\[
u := \log(f)
\quad\text{ and }\quad 
u_h := \log(f) + \log(1+f_h) .
\]
Then, for $\vf(s) = \exp(s)-1-s$, we have
\begin{align*}
N_\vf(u) 
	&= \int_0^1 f(x) \dd x - 1 - \int_0^1 \log(f(x)) \dd x , \\
N_\vf(u_h) 
	&= \int_0^1 f(x) \dd x - 1 - \int_0^1 \log(f(x)) \dd x \\
		&\qquad	+ \int_0^1 f(x)\,f_h(x) \dd x - \int_0^1 \log(1+f_h) \dd x , \\
N_\vf(u_h-u) 
	&= \int_0^1 f_h(x) \dd x - \int_0^1 \log(1+f_h(x)) \dd x .
\end{align*}
We conclude that $u,u_h \in K_{\vf}((0,1))$, $N_\vf(|u-u_h|)\to 0$ but
\[
N_\vf(|u_h|) - N_\vf(|u|) 
= \int_0^1 f(x)\,f_h(x) \dd x - \int_0^1 \log(1+f_h) \dd x
\overset{h\to\infty}{\longrightarrow}1 ,
\]
that is, $u_h$ is not $\vf$-energy convergent to $u$.
Notice that the key fact is that $f_h\to0$ in $L^1((0,1))$ but $f\cdot f_h\not\to0$.
Let us also observe that $(2u_h)_h\subset K_{\vf}(\Omega)$, but neither $2u\in K_{\vf}(\Omega)$ nor  $(2u_h)_h$ is mean convergent to $2u$ with respect to  $N_{\vf}$. 
\end{ex}

\subsection{The vector space \texorpdfstring{$E_\vf(\Omega)$}{Ephi(Omega)}}
Let $E_\vf(\Omega)$ denote the closure in $(L_\vf(\Omega),\|\cdot\|_\vf)$ of the space of functions $u$ which are bounded 
in $\Omega$ with bounded support in $\Omega$.

One can see  (\cite[Sect.~8.14]{A}) that $E_\vf(\Omega)\subset\,K_\vf(\Omega)$ and that, if $(\vf,\Omega)$ is {\it $\Delta$-regular}, then
\[
E_\vf(\Omega)=\,K_\vf(\Omega)=\,L_\vf(\Omega)\,.
\]
Moreover the following characterization of $E_\vf(\Omega)$ holds ([A, Lemma 8.15]).
\begin{lemma}\label{EAmaxsubsp} $E_\vf(\Omega)$ is  the maximal linear subspace of $K_\vf(\Omega)$.
\end{lemma}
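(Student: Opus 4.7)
The plan is to verify two things: that $E_\vf(\Omega)$ itself is a linear subspace contained in $K_\vf(\Omega)$, and then that every linear subspace $V$ of $K_\vf(\Omega)$ is contained in $E_\vf(\Omega)$. The first is essentially built in: $E_\vf(\Omega)$ is by definition the closure of the linear space of bounded, boundedly supported functions in the Banach space $(L_\vf(\Omega),\|\cdot\|_\vf)$, so it is automatically a closed linear subspace of $L_\vf(\Omega)$, and its inclusion in $K_\vf(\Omega)$ is already recorded in \cite[Sect.~8.14]{A}.

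For the maximality, the key observation is purely algebraic: if $V\subseteq K_\vf(\Omega)$ is a linear subspace and $u\in V$, then $\lambda u\in V\subseteq K_\vf(\Omega)$ for every $\lambda\in\R$, hence
\[
N_\vf(\lambda u)=\int_\Omega \vf(\lambda|u(x)|)\dd x<\infty\qquad\text{for every }\lambda>0.
\]
So it suffices to show that any $u\in L_\vf(\Omega)$ with this stronger integrability property belongs to $E_\vf(\Omega)$.

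To do that I would produce an explicit approximation by bounded, boundedly supported functions. Exhaust $\Omega$ by the sets
\[
A_k:=\{x\in\Omega:\ \operatorname{dist}(x,\partial\Omega)\ge 1/k,\ |x|\le k,\ |u(x)|\le k\},
\]
which have closure compact in $\Omega$, and set $u_k:=u\,\chi_{A_k}$. Each $u_k$ is bounded with bounded support in $\Omega$, hence lies in the set whose $L_\vf$-closure is $E_\vf(\Omega)$. I would then show $\|u-u_k\|_\vf\to 0$. By Proposition~\ref{comparconv}(iii), this reduces to proving that $N_\vf(\lambda(u-u_k))\to 0$ for every $\lambda>0$. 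Since $|u-u_k|=|u|\,\chi_{\Omega\setminus A_k}$ and $u$ is finite a.e.\ in $\Omega$, one has $\chi_{\Omega\setminus A_k}\to 0$ a.e.\ on $\Omega$; moreover $\vf(\lambda|u-u_k|)\le\vf(\lambda|u|)$, and the dominating function $\vf(\lambda|u|)$ lies in $L^1(\Omega)$ precisely because $\lambda u\in K_\vf(\Omega)$. Dominated convergence then yields the desired $L^1$-convergence, hence $u_k\to u$ in $L_\vf(\Omega)$ and $u\in E_\vf(\Omega)$.

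There is essentially no obstacle; the only subtlety is realizing that to conclude $u\in E_\vf(\Omega)$ one must control $N_\vf(\lambda(u-u_k))$ for \emph{every} $\lambda>0$ (characterization (iii) of norm convergence), and this is exactly where the linearity of $V$ is used — it provides the integrability of $\vf(\lambda|u|)$ that is needed as the dominating function for each $\lambda$. Pointwise truncation plus dominated convergence then does the rest.
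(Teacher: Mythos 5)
Correct, and your argument is the standard one. The paper proves nothing here---it simply cites \cite[Lemma~8.15]{A}---and the proof in Adams proceeds exactly as you do: linearity of $V$ forces $N_\vf(\lambda u)<\infty$ for every $\lambda>0$, and any such $u$ is then approximated in $\|\cdot\|_\vf$ by bounded, boundedly supported truncations using dominated convergence together with the characterization of norm convergence in Proposition~\ref{comparconv}(iii).
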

\begin{corollary}\label{cor620a76f2}
If  $(\vf,\Omega)$ is not {\it $\Delta$-regular},  it holds that
\[
E_\vf(\Omega)\subsetneq\,K_\vf(\Omega) \subsetneq L_\vf(\Omega)\,.
\]
\end{corollary}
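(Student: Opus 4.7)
The corollary is a formal consequence of Lemmas~\ref{KAsv} and~\ref{EAmaxsubsp}, and the plan is to derive each of the two strict inclusions from the failure of $K_\vf(\Omega)$ to be a vector space, which non-$\Delta$-regularity furnishes via Lemma~\ref{KAsv}.

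For the inclusion $K_\vf(\Omega)\subsetneq L_\vf(\Omega)$, I would first observe that $K_\vf(\Omega)$ is convex (Proposition~\ref{propKA}(i)), contains $0$, and is symmetric under $u\mapsto -u$. Such a set is automatically a vector space as soon as it is closed under the scaling $u\mapsto 2u$: indeed, convexity gives $\frac{u+v}{2}\in K_\vf(\Omega)$ for $u,v\in K_\vf(\Omega)$, and then closure under doubling gives $u+v\in K_\vf(\Omega)$; closure under arbitrary positive scalings follows by iterating doubling and again invoking convexity with $0$, while symmetry handles negative scalars. Hence the failure of the vector space property forced by Lemma~\ref{KAsv} must come from the existence of some $u\in K_\vf(\Omega)$ and some $\lambda>0$ with $\lambda u\notin K_\vf(\Omega)$. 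Since by the very definition $L_\vf(\Omega)=\{\lambda u:\lambda\in\R,\,u\in K_\vf(\Omega)\}$, the function $\lambda u$ lies in $L_\vf(\Omega)\setminus K_\vf(\Omega)$, which is the desired strict inclusion.

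For the inclusion $E_\vf(\Omega)\subsetneq K_\vf(\Omega)$, the quickest route is to invoke Lemma~\ref{EAmaxsubsp}, which identifies $E_\vf(\Omega)$ as the maximal linear subspace of $K_\vf(\Omega)$: if the equality $E_\vf(\Omega)=K_\vf(\Omega)$ held, then $K_\vf(\Omega)$ would itself be a linear subspace of $L_\vf(\Omega)$, again contradicting Lemma~\ref{KAsv}. Equivalently, one may argue directly that $E_\vf(\Omega)$, being by construction the norm-closure in the Banach space $(L_\vf(\Omega),\|\cdot\|_\vf)$ of the linear space of bounded functions with bounded support, is automatically a closed linear subspace of $L_\vf(\Omega)$, whence the same conclusion via Lemma~\ref{KAsv}.

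There is essentially no obstacle in the argument: the whole proof is a short formal consequence of the two preceding lemmas. The only point requiring a moment of care is the translation of the algebraic statement ``$K_\vf(\Omega)$ is not a vector space'' into the concrete existence of a scalar multiple $\lambda u$ escaping $K_\vf(\Omega)$, and this is precisely what the convexity-plus-symmetry observation above provides.
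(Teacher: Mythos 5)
Your proof is correct and follows essentially the same route as the paper's, deriving both strict inclusions from Lemma~\ref{KAsv} and Lemma~\ref{EAmaxsubsp}. The only addition is the explicit construction of an element of $L_\vf(\Omega)\setminus K_\vf(\Omega)$ via convexity and symmetry, which is a fine but unnecessary elaboration: since $L_\vf(\Omega)$ is by definition the linear hull of $K_\vf(\Omega)$ and hence a vector space, $K_\vf(\Omega)\ne L_\vf(\Omega)$ follows immediately from $K_\vf(\Omega)$ not being a vector space.
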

\begin{proof}
By Lemma \ref{KAsv}, $K_\vf(\Omega)$ cannot be a vector space. Thus, by Lemma \ref{EAmaxsubsp}, we get the desired conclusions.
\end{proof}
Let us now recall some density results in $(E_\vf(\Omega),\|\cdot\|_A)$.
\begin{theorem}[{\cite[Thm. 8.20]{A}}]\label{densEA}
Let $\Omega\subset\rn$ be an open set  and let $\vf$ be  a $N$-function.
\begin{itemize}
\item[(i)] $C^\infty_c(\Omega)$ are dense in $(E_\vf(\Omega),\|\cdot\|_\vf)$.
\item[(ii)] $(E_\vf(\Omega),\|\cdot\|_\vf)$ is separable.
\item[(iii)] Let us extend $u\in E_\vf(\Omega)$ to the whole $\rn$ so as to vanish outside $\Omega$ and 
let $(\rho_\eps)_\eps$ be a family of mollifiers on $\rn$. Then
\[ 
\rho_\eps*u\to u \text{ in }(E_\vf(\Omega),\|\cdot\|_\vf)\text{, as } \eps\to 0\,.
\]
\end{itemize}
\end{theorem}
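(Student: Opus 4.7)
The plan is to reduce all three assertions to two soft ingredients: a \emph{key lemma} on how Luxemburg convergence is implied by uniform $L^\infty$ control on a fixed bounded support, together with a \emph{contraction inequality} for convolution on $(L_\vf,\|\cdot\|_\vf)$. The key lemma asserts that if $(w_\eps)$ is a family of measurable functions on $\Omega$ with $\|w_\eps\|_\infty\le M$, $\spt w_\eps\subset K$ for some bounded $K$, and $w_\eps\to 0$ in measure (in particular, a.e.\ or in $L^1$), then $\|w_\eps\|_\vf\to 0$. Indeed, for every $\lambda>0$ the integrand $\vf(|w_\eps|/\lambda)$ is bounded by the constant $\vf(M/\lambda)$ on the fixed set $K$ and tends to $0$ in measure, so dominated convergence gives $\int_\Omega\vf(|w_\eps|/\lambda)\,\dd x\to 0$; hence $\|w_\eps\|_\vf\le\lambda$ eventually, and $\lambda$ is arbitrary. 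The contraction inequality
\[
\|\rho_\eps*f\|_\vf\le \|f\|_\vf\qquad\forall f\in L_\vf(\rn)
\]
follows from Jensen's inequality applied to the probability measure $\rho_\eps(x-y)\,\dd y$: one has $\vf(|(\rho_\eps*f)(x)|/\lambda)\le\int \rho_\eps(x-y)\vf(|f(y)|/\lambda)\,\dd y$, and Fubini yields the bound after integrating over $x$.

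Granted these, (iii) is immediate. Given $u\in E_\vf(\Omega)$ and $\eta>0$, by definition of $E_\vf$ pick a bounded function $v$ with bounded support such that $\|u-v\|_\vf<\eta/2$. Extending $v$ by $0$ to $\rn$, the sequence $\rho_\eps*v$ converges a.e.\ to $v$ (standard mollification), is bounded by $\|v\|_\infty$, and has support contained in $\spt v+\overline{B_\eps(0)}$, a fixed bounded set for $\eps\le 1$. The key lemma yields $\|\rho_\eps*v-v\|_\vf\to 0$, while the contraction inequality gives $\|\rho_\eps*(u-v)\|_\vf\le\|u-v\|_\vf<\eta/2$; hence $\|\rho_\eps*u-u\|_\vf<\eta$ for $\eps$ small. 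Part (i) is then a minor variant: choose $v$ with compact support in $\Omega$ (possible by a further truncation of $v$ against a smooth cutoff, whose effect in Luxemburg norm is controlled again by the key lemma). For $\eps<{\rm dist}(\spt v,\partial\Omega)$ the mollification $\rho_\eps*v$ belongs to $C^\infty_c(\Omega)$ and approximates $v$ in $\|\cdot\|_\vf$, as above.

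For (ii), exhaust $\Omega$ by compacta $K_n\subset\subset K_{n+1}^\circ$, fix cutoffs $\chi_n\in C^\infty_c(K_{n+1}^\circ)$ with $\chi_n\equiv 1$ on $K_n$, and take a countable $L^\infty$-dense family $\mathcal F_n\subset C(K_{n+1})$ (for instance, polynomials with rational coefficients). Then $\bigcup_n\{\chi_n\varphi:\varphi\in\mathcal F_n\}$ is a countable subset of $L^\infty$ with uniformly bounded supports in $\Omega$; mollifying each with rational $\eps$ produces a countable family in $C^\infty_c(\Omega)$. To see density in $E_\vf(\Omega)$, combine (i) with the key lemma: any $\phi\in C^\infty_c(\Omega)$ is supported in some $K_n$, and any $L^\infty$-approximation of $\phi$ with support in a fixed bounded set converges in $\|\cdot\|_\vf$ by the lemma. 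The main (and only) subtlety in the whole argument is the interplay between $L^\infty$-control and Luxemburg control, isolated by the key lemma; everything else is a routine chaining of density-by-definition, Jensen, and dominated convergence.
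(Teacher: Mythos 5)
Your proof is correct and follows the standard textbook argument for this result (which the paper itself cites from Adams without proof): the two ingredients you isolate — that uniform $L^\infty$ control on a fixed bounded support forces Luxemburg convergence via dominated convergence on the modular, and the Jensen/Fubini contraction $\|\rho_\eps*f\|_\vf\le\|f\|_\vf$ — are exactly the tools used there. The only nit is the bookkeeping in (iii): with $\|u-v\|_\vf<\eta/2$ the triangle inequality gives three terms and lands you slightly above $\eta$, so one should start from $\eta/3$ (or $\eta/4$, etc.); this is cosmetic and does not affect the argument.
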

An immediate consequence of Theorem \ref{densEA}
is that,
if  $(\vf,\Omega)$ is not $\Delta$-regular, then $C^0_c(\Omega)$ is not dense in $(L_\vf(\Omega),\|\cdot\|_\vf)$.
In fact, one can prove the following stronger result:
\begin{theorem}[{\cite[Chap.~II, Thm.~10.2]{KR}}]
If the pair $(\vf,\Omega)$ is not $\Delta$-regular, then \mbox{$(L_\vf(\Omega),\|\cdot\|_\vf)$} is not separable.
\end{theorem}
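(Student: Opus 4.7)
The plan is to construct an uncountable family $\{u_S\}_{S\subseteq\N}$ in $L_\vf(\Omega)$ whose Luxemburg norms are bounded by $1$ but whose pairwise distances are bounded below by a fixed positive constant, which immediately rules out separability of $(L_\vf(\Omega),\|\cdot\|_\vf)$.

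First I would use the failure of $\Delta$-regularity to extract a sequence $(t_k)\subset(0,\infty)$ satisfying
\[
\vf(2 t_k)\ \ge\ 2^{k+1}\,\vf(t_k)\qquad\text{for every }k\in\N.
\]
There are two sub-cases dictated by the definition of $\Delta$-regularity. If $\vf$ does not satisfy the $\Delta_2$-condition near infinity, then $\vf(2t)/\vf(t)$ is unbounded as $t\to\infty$, and I pick $t_k\to\infty$ along a subsequence. Otherwise $\vf$ satisfies $\Delta_2$ near infinity but not globally, and the definition of $\Delta$-regularity forces $|\Omega|=\infty$; failure of the global condition then delivers $t_k\to0$ with the same ratio bound. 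In both situations I next pick pairwise disjoint measurable sets $A_k\subset\Omega$ with
\[
|A_k|\ =\ 2^{-k}/\vf(t_k).
\]
In the first sub-case the $A_k$ shrink, so they fit inside any open subset of $\Omega$; in the second sub-case they may be large, but $|\Omega|=\infty$ leaves enough room, since any open set in $\rn$ is measurably decomposable into countably many disjoint pieces of arbitrarily prescribed finite measures.

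For each $S\subseteq\N$ I would then set
\[
u_S\ :=\ \sum_{k\in S} t_k\,\chi_{A_k}.
\]
Disjointness of the $A_k$ gives $N_\vf(u_S)=\sum_{k\in S}|A_k|\,\vf(t_k)=\sum_{k\in S}2^{-k}\le 1$, hence $u_S\in K_\vf(\Omega)\subset L_\vf(\Omega)$ with $\|u_S\|_\vf\le 1$. For distinct $S,T\subseteq\N$, choose any $k_0\in S\triangle T$: then $|u_S-u_T|\ge t_{k_0}\chi_{A_{k_0}}$, so
\[
N_\vf\bigl(2(u_S-u_T)\bigr)\ \ge\ |A_{k_0}|\,\vf(2t_{k_0})\ \ge\ 2^{-k_0}\cdot 2^{k_0+1}\ =\ 2\ >\ 1,
\]
which by the definition of the Luxemburg norm forces $\|u_S-u_T\|_\vf>1/2$. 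Since the family $\{u_S\}_{S\subseteq\N}$ has the cardinality of the continuum and its elements are pairwise at distance $>1/2$, no countable subset of $L_\vf(\Omega)$ can be dense.

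The main obstacle I anticipate lies in the first step: failure of a $\Delta_2$-condition yields only that the limsup of $\vf(2t)/\vf(t)$ is infinite on the relevant range of $t$, so one needs an elementary but careful diagonal extraction, coupled with the case split on $|\Omega|$, to produce the geometric bound $\vf(2t_k)\ge 2^{k+1}\vf(t_k)$. Once this sequence is in hand, the construction is essentially forced: the geometric decay $|A_k|\vf(t_k)=2^{-k}$ is what keeps the modulars $N_\vf(u_S)$ summable, while the much larger value $\vf(2t_{k_0})$ at the doubling point drives the uniform lower bound on the pairwise Luxemburg distances.
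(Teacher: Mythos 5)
Your argument is correct, and it is essentially the classical proof of this fact; the paper itself gives no proof at all here, it only cites Krasnosel'skii--Rutickii, so there is nothing in the text to compare against beyond the statement. Your case analysis is the right one: non-$\Delta$-regularity forces either failure of $\Delta_2$ near infinity (so $t_k\to\infty$ works for any $\Omega$, since $\vf(t_k)\to\infty$ makes $|A_k|=2^{-k}/\vf(t_k)$ summable), or failure of the global condition together with $|\Omega|=\infty$ (so $t_k\to0$ works because there is infinite room for the possibly large $A_k$). Two small points to tighten. First, in the sub-case $t_k\to\infty$ you should say explicitly that, after passing to a subsequence, $\sum_k 2^{-k}/\vf(t_k)\le|\Omega|$; ``the $A_k$ shrink'' does not by itself guarantee that the first few terms fit when $|\Omega|$ is small, but since $\vf(t_k)\to\infty$ this is immediate. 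Second, the separation you actually obtain is $\|u_S-u_T\|_\vf\ge 1/2$ rather than the strict inequality: from $N_\vf\bigl(2(u_S-u_T)\bigr)\ge 2>1$ and the monotonicity of $\lambda\mapsto N_\vf(v/\lambda)$ one concludes that no $\lambda\le 1/2$ is admissible in the Luxemburg infimum, hence the norm is at least $1/2$; this non-strict bound is of course still enough, since the open balls of radius $1/5$ around the uncountably many $u_S$ are pairwise disjoint and each would have to meet any dense set.
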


Let us also point out  some density results  in $K_\vf(\Omega)$ with respect to the modular convergence.

\begin{theorem}
Let $\Omega\subset\rn$ be an open set  and let $\vf$ be  a $N$-function.
\begin{itemize}
\item[(i)]
The set of bounded functions on $\Omega$ contained in $K_\vf(\Omega)$  
with bounded support
is dense in $K_\vf(\Omega)$ with respect to the mean convergence, that is, for each $u\in K_\vf(\Omega)$ there exists a sequence of bounded functions $(u_h)_h\subset K_\vf(\Omega)$ such that 
\[
N_\vf(u_h-u)\to 0\text{, as }h\to \infty\,.
\]
\item[(ii)] $C^0_c(\Omega)$  is dense in $K_\vf(\Omega)$ with respect to the modular convergence with $\lambda=4$.
More precisely, for each $u\in K_\vf(\Omega)$, there is a sequence $(u_h)_h\subset C^0_c(\Omega)$ such that
\[
N_\vf\left(\frac{u_h-u}{4}\right)\to 0\text{, as }h\to\infty\,.
\]
\end{itemize}
\end{theorem}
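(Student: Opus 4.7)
For part (i), the natural strategy is truncation plus restriction to bounded subsets. Define $T_h(s):=\max(-h,\min(h,s))$ and set
\[
u_h := T_h(u)\cdot \mathbf{1}_{\Omega\cap B(0,h)}.
\]
Then $u_h$ is bounded by $h$ and supported in $B(0,h)$, hence belongs to $K_\vf(\Omega)$. Writing $A_h:=\{|u|>h\}\cup(\Omega\setminus B(0,h))$, a case analysis gives $|u_h-u|\le |u|\,\mathbf{1}_{A_h}$ pointwise. Since $\vf$ is nondecreasing, $\vf(|u_h-u|)\le \vf(|u|)\,\mathbf{1}_{A_h}$; as $\vf(|u|)\in L^1(\Omega)$ and $\mathbf{1}_{A_h}\to 0$ a.e.\ (using that $|u|<\infty$ a.e.), Lebesgue's dominated convergence theorem yields $N_\vf(u_h-u)\to 0$.

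For part (ii), I would combine a refinement of (i) with mollification. Replacing $\Omega\cap B(0,h)$ with the relatively compact set
\[
\Omega'_h := \{x\in\Omega \,:\, |x|<h,\ \mathrm{dist}(x,\partial\Omega)>1/h\},
\]
the truncation argument of (i) produces $v_h:=T_h(u)\,\mathbf{1}_{\Omega'_h}$, bounded with compact support in $\Omega$, satisfying $N_\vf(u-v_h)\to 0$. For each such $v_h$, letting $(\rho_\eps)_\eps$ be a standard family of mollifiers, the convolutions $\rho_\eps*v_h$ converge a.e.\ to $v_h$ as $\eps\to 0$, are bounded by $h$, and (for $\eps<\mathrm{dist}(\mathrm{supp}\,v_h,\partial\Omega)$) have supports in a common compact subset of $\Omega$; dominated convergence then gives $N_\vf(v_h-\rho_\eps*v_h)\to 0$. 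Choose $\eps_h$ small enough that $\tilde v_h:=\rho_{\eps_h}*v_h\in C^\infty_c(\Omega)\subset C^0_c(\Omega)$ and $N_\vf(v_h-\tilde v_h)<1/h$.

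To combine the two approximations I would exploit convexity, which is precisely where $\lambda=4$ enters. Using $\vf(0)=0$ and convexity one has $\vf(t/2)\le \vf(t)/2$, hence
\[
\vf\!\left(\frac{a+b}{4}\right) \le \frac{1}{2}\vf\!\left(\frac{a}{2}\right)+\frac{1}{2}\vf\!\left(\frac{b}{2}\right) \le \frac{\vf(a)+\vf(b)}{4}.
\]
Applied to $a=|u-v_h|$ and $b=|v_h-\tilde v_h|$ and integrated, this gives
\[
N_\vf\!\left(\frac{u-\tilde v_h}{4}\right) \le \frac{1}{4}\bigl(N_\vf(u-v_h)+N_\vf(v_h-\tilde v_h)\bigr),
\]
which tends to $0$ as $h\to\infty$, yielding modular convergence with $\lambda=4$. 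There is no single hard step in this argument; the main subtlety is that the constant $4$ cannot be replaced by $1$ in general, because the two successive triangle inequalities each cost a factor of $2$ through convexity, reflecting the failure of mean convergence of $C^0_c(\Omega)$ when $(\vf,\Omega)$ is not $\Delta$-regular (as already shown by Example~\ref{ex}).
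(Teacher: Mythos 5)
Your proof is correct, and part (ii) takes a genuinely different route from the paper. For part (i), the paper simply cites \cite{KR} and \cite{A}, and your truncation-plus-restriction argument (domination by $\vf(|u|)\mathbf{1}_{A_h}$) is exactly the standard proof being referenced. For part (ii), the paper's proof splits the approximation into three steps: spatial truncation to get $\tilde u$ with compact support, value truncation to $\tilde u_k$, and then Lusin's theorem to produce $f\in C^0_c(\Omega)$ agreeing with $\tilde u_k$ outside a set of small measure; the two error terms are combined using the $N_{\vf_2}$, $N_{\vf_4}$ bookkeeping to land on $\lambda=4$. You instead do the spatial and value truncations in a single step producing the bounded compactly supported $v_h$, and then replace Lusin by mollification, observing that $\rho_\eps*v_h\to v_h$ a.e.\ with the uniform bound $\vf(2h)\mathbf{1}_{K_h}$ (where $K_h\Subset\Omega$ is a fixed compact neighbourhood of $\mathrm{supp}\,v_h$) giving $N_\vf(v_h-\rho_\eps*v_h)\to 0$ by dominated convergence. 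Both approaches incur the same factor of $4$ from two successive convexity ``triangle inequalities.'' The mollification route has the modest advantage of directly producing $C^\infty_c(\Omega)$ approximants rather than merely $C^0_c(\Omega)$ ones, whereas Lusin's theorem is more flexible in measure-theoretic settings where convolution is unavailable; in this Euclidean context they are interchangeable.
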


\begin{proof} 
The proof of part (i) can be found in \cite[Chap.~II, pg.~77]{KR} or \cite[Sect.~8.14]{A}.

To prove part (ii), 
given $u\in K_\vf(\Omega)$ and $\epsilon>0$,
we first notice that, by a standard truncation argument in $\Omega$,
there is a function $\tilde{u}\in K_\vf(\Omega)$ with  support compactly contained in $\Omega$
 and with $N_{\vf}(u-\tilde{u})<\epsilon$.

Next, let $\tilde{u}_k:=\max\{-k,\min\{\tilde{u},k\}\}$ be the standard truncation of $u$,  $F_k:=\{|\tilde{u}|>k\}$ and  $f\in C^0_c(\Omega)$ with $\sup|f|\le k$. 
We estimate
\begin{align*}
N_{\vf_2}(\tilde{u}-f)
&= \int_{\Omega\setminus F_k}\vf\left(\frac{|\tilde{u}_k-f|}{2}\right)\dd x+\int_{F_k} \vf\left(\frac{|\tilde{u}-f|}{2}\right)\dd x \\
&\le \int_{\Omega\setminus F_k}\vf\left(\frac{|\tilde{u}_k-f|}{2}\right)\dd x
	+ \frac 12\int_{F_k} \vf(|\tilde{u}|)\dd x
	+ \frac 12\int_{F_k} \vf(|f|)\dd x \\
&\le \int_{\Omega\setminus F_k}\vf\left(\frac{|\tilde{u}_k-f|}{2}\right)\dd x
	+ \int_{F_k} \vf(|\tilde{u}|)\dd x .
\end{align*}
Now, since $\int_\Omega \vf(|\tilde{u}|)\dd x<\infty$, 
we can choose $k$ so large that the second integral is smaller than $\epsilon/2$.
Since $\tilde{u}_k$ has compact support and thanks to Lusin's theorem, we can find
 $f\in C^0_c(\Omega)$ with $|f|\le k$ and the Lebesgue measure of
$\{x\in\Omega:\ \tilde{u}_k(x)\neq f(x)\}$ sufficiently small, in such a way that also the 
first integral gives a contribution smaller than $\epsilon/2$. 

In conclusion, for every $\epsilon>0$ we have $f\in C^0_c(\Omega)$ such that 
\[
N_{\vf_4}(u-f) 
\le \frac12 (N_{\vf_2}(u-\tilde u) + N_{\vf_2}(\tilde u - f) )
\le \frac12 (N_{\vf}(u-\tilde u)/2 + \epsilon )
\le \epsilon .
\] 
\end{proof}
\subsection{ Orlicz-Sobolev spaces and density results of smooth functions.}\label{densityOS}
Given a $N$-function $\vf$, the {\it Orlicz-Sobolev vector space $W^1L_\vf(\Omega)$} consists of those (equivalence classes of) functions $u\in L_\vf(\Omega)\cap W^{1,1}_{loc}(\Omega)$ whose weak derivatives $D_iu\in L_\vf(\Omega)$ for each $i=1,\ldots,n$. The vector space $W^1E_\vf(\Omega)$ and the convex set $W^1K_\vf(\Omega)$  are defined in analogous fashion. Obviously
\[
W^1E_\vf(\Omega)\subset W^1K_\vf(\Omega)\subset W^1L_\vf(\Omega)\,.
\]

It is easy to see (see, for instance, \cite[\S8.27]{A}) that  $W^1L_\vf(\Omega)$ is a Banach space with respect to the norm
\[
\|u\|_{1,\vf}:=\,\max\{\|u\|_{\vf},\|D_1u\|_{\vf},\ldots, \|D_nu\|_{\vf}\} \,.
\]
Notice also that, since
\[
\max_i|D_iu| 
	\le |Du|
	\le\,\sum_{i=1}^n|D_iu|
	\text{ a.e. in }\Omega\,,
\]
and $L_\vf(\Omega)$ is a linear space, an equivalent norm on $W^1L_\vf(\Omega)$ is given by.
\[
\|u\|_{\vf}+\||Du|\|_{\vf}\,.
\]
Observe that $W^1E_\vf(\Omega)$  turns out to be a closed subspace  of $W^1L_\vf(\Omega)$. Moreover $W^1E_\vf(\Omega)$ coincides with $W^1L_\vf(\Omega)$ if and only if $(\vf,\Omega)$ is  {\it $\Delta$-regular}. 
 Notice also that, for the applications we have in mind, what is more relevant is the $\vf$-integrability of the derivative, rather than the integrability of the function which, also in view of Sobolev embeddings,
could be qualified in a different way, see also Remark~\ref{rem:different_integrability}.

 Celebrated Meyers-Serrin's result was extended from the classical Sobolev spaces  to the Orlicz-Sobolev space $W^1E_\vf(\Omega)$ in \cite{DT} (see also \cite{AFY}).
 \begin{theorem}[{\cite{DT}}]\label{MSnorm}
 $C^\infty(\Omega)\cap W^1E_\vf(\Omega)$ is dense in $(W^1E_\vf(\Omega),\|\cdot\|_{1,\vf}) $.
 \end{theorem}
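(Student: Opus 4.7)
The plan is to follow the classical Meyers--Serrin scheme---partition of unity plus mollification---with Theorem~\ref{densEA}(iii) as the essential analytic input: mollification converges in the Luxemburg norm on $E_\vf(\Omega)$, and by hypothesis both $u$ and all $D_iu$ lie in $E_\vf(\Omega)$.

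First I would fix an exhaustion $\emptyset=\Omega_0\Subset\Omega_1\Subset\Omega_2\Subset\cdots$ of $\Omega$, set $U_k:=\Omega_{k+1}\setminus\overline{\Omega_{k-1}}$ (with $\Omega_{-1}:=\emptyset$), and pick a smooth partition of unity $(\zeta_k)_k$ subordinate to the locally finite cover $(U_k)_k$. Because each $\zeta_k$ and $D\zeta_k$ is bounded with compact support in $U_k$, the product rule together with the vector space property of $E_\vf(\Omega)$ (Lemma~\ref{EAmaxsubsp}) gives $\zeta_k u\in E_\vf(\Omega)$ and $D(\zeta_k u)=\zeta_k Du+u\,D\zeta_k\in E_\vf(\Omega;\rn)$, both supported in $\overline{U_k}$. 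Given $\epsilon>0$, Theorem~\ref{densEA}(iii) lets me choose $\delta_k>0$ so small that $\rho_{\delta_k}*(\zeta_k u)$ remains supported in $\Omega_{k+2}\setminus\overline{\Omega_{k-2}}$ and
\[
\|\rho_{\delta_k}*(\zeta_k u)-\zeta_k u\|_\vf+\|\rho_{\delta_k}*D(\zeta_k u)-D(\zeta_k u)\|_\vf<\epsilon\,2^{-k}.
\]
Setting $v_\epsilon:=\sum_k\rho_{\delta_k}*(\zeta_k u)$, the preserved local finiteness gives $v_\epsilon\in C^\infty(\Omega)$ with $Dv_\epsilon=\sum_k\rho_{\delta_k}*D(\zeta_k u)$; moreover $v_\epsilon\in E_\vf(\Omega)$ because its compactly supported, bounded partial sums converge to it in Luxemburg norm and $E_\vf$ is closed in $L_\vf$. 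Writing $u=\sum_k\zeta_k u$ and using Jensen's inequality applied to $\vf$ to extend the triangle inequality for $\|\cdot\|_\vf$ to the (pointwise locally finite) countable sum, I would conclude
\[
\|u-v_\epsilon\|_{1,\vf}\le\sum_k\bigl(\|\rho_{\delta_k}*(\zeta_k u)-\zeta_k u\|_\vf+\|\rho_{\delta_k}*D(\zeta_k u)-D(\zeta_k u)\|_\vf\bigr)<\epsilon.
\]

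The genuinely essential point is the choice of ambient space: mollifiers do not in general converge in the Luxemburg norm on the whole $L_\vf(\Omega)$ when $(\vf,\Omega)$ fails to be $\Delta$-regular (cf.~Corollary~\ref{cor620a76f2}), so working inside $W^1E_\vf(\Omega)$ is exactly what makes the classical Meyers--Serrin machinery apply verbatim. Everything else---the product rule, the preservation of local finiteness under slight enlargement of supports, and the $\|\cdot\|_\vf$-triangle inequality for locally finite sums---is routine and independent of the specific growth of $\vf$.
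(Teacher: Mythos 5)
The paper cites this theorem from~\cite{DT} without giving its own proof, so there is no in-text argument to compare against. Your proof is the classical Meyers--Serrin partition-of-unity plus mollification scheme, which is essentially what~\cite{DT} does, and it is correct; you rightly single out the crucial point that one must work in $W^1E_\vf$ rather than $W^1L_\vf$ or $W^1K_\vf$, because $E_\vf$ is exactly where mollification converges in Luxemburg norm (Theorem~\ref{densEA}(iii)). One step is stated a little too glibly: that $\zeta_k u\in E_\vf(\Omega)$ and $u\,D_i\zeta_k\in E_\vf(\Omega)$ does not follow from $E_\vf$ being a vector space---it needs the separate fact that $E_\vf$ is stable under multiplication by $L^\infty$ functions. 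This you can get from the estimate $\|gv\|_\vf\le\|g\|_{L^\infty}\|v\|_\vf$, which shows multiplication by $g\in L^\infty$ is a bounded operator on $L_\vf$ that maps bounded functions with bounded support to bounded functions with bounded support, hence preserves their closure $E_\vf$; alternatively, from Lemma~\ref{EAmaxsubsp}, $v\in E_\vf$ if and only if $\lambda v\in K_\vf$ for every $\lambda>0$, and monotonicity of $\vf$ transfers this property from $v$ to $gv$. Once that is supplied, the mollification choices with $\epsilon 2^{-k}$, the preserved local finiteness, and the countable triangle inequality go through exactly as you wrote them.
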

It is easy to see  that the previous result also fails  for  functions in the  Orlicz-Sobolev class $W^1K_\vf(\Omega)$, and so also in the Orlicz-Sobolev space $W^1L_\vf(\Omega)$, provided that $(\vf,\Omega)$ is not $\Delta$-regular, as the following example shows. 

\begin{ex}\label{notMSKvf }
Assume that $n=1$,  $\Omega=\,(-1,1)$, let $\vf=\,\widetilde\exp_0$ be $N$-function in \eqref{Agammafunction} with $\gamma=0$ and let
\[
u(x):=
\displaystyle{\begin{cases}
\frac{x}{2}\,\log\frac{1}{e|x|}&\text{ if }|x|\le\,\frac{1}{e}\\
0&\text{ if }\frac{1}{e}<\,|x|\,<\,1
\end{cases}
}
\,.
\]
Then it is easy to see that $u\in W^1K_\vf(\Omega)\setminus W^1E_\vf(\Omega)$, since the weak derivative
\[
u'(x):=
\begin{cases}
\log\displaystyle{\frac{1}{e\,\sqrt {|x|}}}&\text{ if }|x|<\,\frac{1}{e}\\
0&\text{ if }\frac{1}{e}<\,|x|\,<\,1
\end{cases}
\text{ a.e. }x\in\Omega\,
\]
belongs to $K_\vf(\Omega)\setminus E_\vf(\Omega)$. Indeed
\[
\int_{-1}^1\vf(|u'|)\,\dd x=\int_{-1}^1\left(\exp(|u'|)-|u'|-1 \right)\,\dd x<\,\infty\,,
\]
but $2|u'|\notin K_\vf(\Omega)$, since
\[
\int_{-1}^1\vf(2|u'|)\,\dd x=\int_{-1}^1\left(\exp(2|u'|)-2|u'|-1 \right)\,\dd x=\,\infty\,.
\]
Thus $|u'|\notin E_\vf(\Omega)$, since  $E_\vf(\Omega)$ is  a linear subspace .
By contradiction, assume there exists a sequence $(u_h)_h\subset C^\infty(\Omega)\cap W^1L_\vf(\Omega)$ such that $u_h\to u$ in  $W^1L_\vf(\Omega)$, as $h\to\infty$. In particular, it also follows that
\begin{equation}\label{convderuh}
u'_h\to u' \text{ in } L_\vf(\Omega)\text{ as }h\to\infty\,.
\end{equation}
Let $\psi\in C^0_c(\Omega)$ such that $0\le\,\psi\le\,1$ and $\psi\equiv\,1$ in $(-1/e, 1/e)$ and let 
\[
v_h:=\,\psi\,u'_h\,.
\]
By Proposition~\ref{comparconv}(iii) and \eqref{convderuh}, it still holds that
\[
E_\vf\ni\psi\,u'_h\to \psi\,u'=\,u'\text{ in } L_\vf(\Omega)\text{, as }h\to\infty\,.
\]
Then a contradiction since $u'\notin E_\vf$.
\end{ex}

A weaker density result of regular functions in $W^1L_\vf(\Omega)$ holds by using the modular convergence, as shown in~\cite{G}.
\begin{theorem}[{\cite{G}}]\label{MSmod}
 Let $u\in W^1L_\vf(\Omega)$. Then there exist $\lambda>\,0$ and a sequence of functions $(u_h)_h\subset C^\infty(\Omega)\cap W^1L_\vf(\Omega)$ such that
 \[
 N_\vf\left(\frac{u_h-u}{\lambda}\right)\to 0\text{ and }N_\vf\left(\frac{D_iu_h-D_iu}{\lambda}\right)\to 0\text{ as }h\to\infty\,,
 \]
 for each $i=1,\ldots,n$. In particular it suffices to choose $\lambda$  such that $\frac{16}{\lambda}D_iu\in K_\vf(\Omega)$.
\end{theorem}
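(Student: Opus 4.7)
The plan is to adapt the classical Meyers--Serrin argument to the Orlicz setting, exploiting that convolution is a contraction at the level of the modular $N_\vf$ via Jensen's inequality, even when it is not a contraction in the Luxemburg norm. I would fix an exhaustion $\Omega_1 \subset \Omega_2 \subset \cdots$ with $\overline{\Omega_j} \subset \Omega_{j+1}$ and $\bigcup_j \Omega_j = \Omega$, together with a locally finite smooth partition of unity $\{\psi_j\}_j$ subordinate to $\{\Omega_{j+1}\setminus\overline{\Omega_{j-1}}\}_j$ (with $\Omega_0 := \emptyset$), so that $\sum_j \psi_j \equiv 1$ and each point lies in the supports of at most a bounded number of the $\psi_j$. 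I would then choose radii $\eps_{j,h} \downarrow 0$ small enough that each $\rho_{\eps_{j,h}} * (\psi_j u)$ is supported in $\Omega_{j+2}\setminus\overline{\Omega_{j-2}}$, and set
\[
u_h := \sum_j \rho_{\eps_{j,h}} * (\psi_j u),
\]
which is a locally finite sum and hence belongs to $C^\infty(\Omega)$.

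Using $u = \sum_j \psi_j u$ and the commutation of distributional differentiation with convolution on compactly supported pieces, the derivative of the error decomposes as
\[
D_i u_h - D_i u
= \sum_j \bigl[\rho_{\eps_{j,h}} * ((D_i \psi_j) u) - (D_i \psi_j) u\bigr]
+ \sum_j \bigl[\rho_{\eps_{j,h}} * (\psi_j D_i u) - \psi_j D_i u\bigr],
\]
with a similar (shorter) decomposition for $u_h - u$. Repeated use of convexity of $\vf$ reduces the modular of the full error at scale $1/\lambda$ to a sum of modulars of single terms $\rho_\eps * g - g$ at an enlarged scale $1/\mu$, with $\mu = \lambda/16$: the four factors of $2$ come from (a) the bounded-in-cardinality outer sum, (b) the product-rule splitting $D_i(\psi_j u) = (D_i \psi_j)u + \psi_j D_i u$, (c) the split $\vf(|\rho_\eps * g - g|/\mu) \le \tfrac12 \vf(2|\rho_\eps * g|/\mu) + \tfrac12 \vf(2|g|/\mu)$, and (d) Jensen's inequality $\vf(2|\rho_\eps * g|/\mu) \le \rho_\eps * \vf(2|g|/\mu)$. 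For each single term, Jensen provides an equi-integrable majorant $\rho_\eps * \vf(2|g|/\mu)$ converging in $L^1(\Omega)$ to $\vf(2|g|/\mu)$, while $\rho_\eps * g \to g$ pointwise a.e., so Vitali's theorem yields $N_\vf((\rho_\eps * g - g)/(2\mu)) \to 0$. The running hypothesis is that $\vf(16|g|/\lambda) \in L^1(\Omega)$ for $g \in \{u,\, D_i u\}$: the $u$ piece is automatic after enlarging $\lambda$ since $u \in L_\vf(\Omega)$, and the $D_i u$ piece is precisely the threshold $(16/\lambda)D_i u \in K_\vf(\Omega)$ appearing in the statement. A final diagonal choice of $(\eps_{j,h})_j$ for each $h$ makes the two outer sums over $j$ simultaneously arbitrarily small.

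The main obstacle is the careful bookkeeping of the scaling factor through these nested convex combinations. Unlike the $\Delta_2$-regular setting of Theorem~\ref{MSnorm}, where one can iterate norm-contraction estimates, here there is an unavoidable loss of scale at each application of convexity, and the whole point of the modular formulation is precisely to accept this loss once and to demand only that some enlarged $\lambda$ work, rather than every $\lambda > 0$ (which would be equivalent to norm convergence by Proposition~\ref{comparconv}(iii), and would force $D_i u \in E_\vf$). A secondary technicality is the diagonal selection of the radii $\eps_{j,h}$: one must exploit the finite-overlap property of the partition of unity and the summable Jensen majorants to make the series of single-term errors tend to $0$ as $h \to \infty$, but this is standard once the single-term modular convergence is in hand.
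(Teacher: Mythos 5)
The paper does not prove this result: it is stated and attributed to Gossez~\cite{G}, and no proof is given. So there is no in-paper argument to compare against, and I evaluate your sketch on its own. Your overall scheme --- the Meyers--Serrin scaffold of exhaustion, subordinate partition of unity, and localized mollification, with Jensen's modular sub-contraction $\vf(|\rho_\eps*g|)\le\rho_\eps*\vf(|g|)$ replacing the $L^p$-contraction of convolution, Vitali's theorem for each single term, and an explicit tracking of the scale losses --- is indeed the right route, and your diagnosis of why only modular (rather than norm) convergence can be expected is exactly correct. One minor remark on the constant: your accounting of $16$ as four factors of $2$ misattributes one of them to Jensen's inequality, which is scale-preserving (same $2/\mu$ on both sides) and costs nothing; the outer sum over $j$, on the other hand, costs a factor equal to the overlap number of the cover, which for the construction used in the proof of Theorem~\ref{thm:mainnew} is $4$. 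A reading such as $16=4\times 2\times 2$ (overlap, product rule, triangle split) is closer to the mark.

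The more serious issue is a genuine gap at the lower-order terms $\rho_{\eps_j}*\bigl((D_i\psi_j)u\bigr)-(D_i\psi_j)u$. Your running hypothesis $\vf(16|u|/\lambda)\in L^1(\Omega)$ does not control them: the Jensen majorant for the $j$-th such term is of the form $\vf(cC_j|u|/\lambda)$ on the ring where $\psi_j$ lives, with $C_j:=\|D\psi_j\|_{L^\infty}$, and these constants necessarily grow without bound as $j\to\infty$, since the partition of unity must steepen near $\partial\Omega$. Outside the $\Delta$-regular case $K_\vf(\Omega)$ is not a vector space, so $\vf(|u|/\lambda)\in L^1$ does not imply $\vf(C|u|/\lambda)\in L^1$ for $C>1$, even on a bounded set --- Example~\ref{ex} in the paper is built on precisely this phenomenon. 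Consequently the dominating function needed to run Vitali on the $j$-th lower-order term may simply not exist, and one cannot dispose of these terms just by shrinking $\eps_j$. In the proof of Theorem~\ref{thm:mainnew} the analogous sum $z_\delta$ is tamed by exploiting continuity of $b$ (making it small in $L^\infty$, so that $\vf(|z_\delta|)\le L|z_\delta|$), a device that is unavailable for a general $u\in W^1L_\vf(\Omega)$. Your plan therefore needs to be supplemented --- for instance by first reducing to bounded, compactly supported $u$ via the modular density results recalled in Section~\ref{section2} --- before mollifying; as written, the argument breaks down precisely where the Orlicz setting parts ways with $L^p$.
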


\section{Exponential and sub-exponential N-functions}
It is easy to see that a convex function $\vf$ with polynomial growth satisfies \eqref{growthvf}, see Remark \ref{convdoublvf}.
We will show in Lemma~\ref{admisubexpbeh} that $\exp_{\gamma,\tau}^*$ satisfies the conditions in Theorem~\ref{thm:mainnew} for $\vf$ with $k_\vf=1$, if
$\tau$ is sufficiently large.

Recall from~\eqref{E*function} and~\eqref{Efunction2} that we have set
\[
\exp_{\gamma,\tau}(t):=\exp\left( \displaystyle\frac{t}{(\log(t+\tau))^\gamma}\right) ,
\quad\text{and}\quad
\exp_{\gamma,\tau}^*(t):=\,\exp_{\gamma,\tau}(t)-1  .
\]
The functions $\exp_{\gamma,\tau}^*$, though convex, do not have null derivative at $0$, and therefore do not fit exactly in the theory
of $N$-functions. Therefore, in order to provide a bridge with the theory of $N$-functions of Orlicz spaces, we will also consider the modified functions
\begin{equation*}
\widetilde \exp_{\gamma,\tau}(t):=\,\exp_{\gamma,\tau}(t)-1-\frac{t}{(\log \tau)^\gamma}=
\exp_{\gamma,\tau}^*(t)-\frac{t}{(\log \tau)^\gamma}\, ,
\end{equation*}
which are indeed $N$-functions and can be treated by comparison with $\exp_{\gamma,\tau}^*$.

\begin{lemma}\label{admisubexpbeh}  
There exists $\tau_0>0$ such that, for all $\tau\ge\tau_0$ and $0\le\gamma\le 1$, one has
\begin{itemize}
\item[(i)]
$\exp_{\gamma,\tau}$ is a smooth strictly convex increasing function. 
Moreover, for all $t,s\in[0,\infty)$, 
\begin{equation}\label{sublineartEgamma}
\exp_{\gamma,\tau}(t+s)\le\,\exp_{\gamma,\tau}(t)\,\exp_{\gamma,\tau}(s) ,
\end{equation}
and $\exp_{\gamma,\tau}^*$ satisfies~\eqref{growthvf} with $k_\vf=1$, that is,
\begin{equation}\label{sublineartE*gamma}
\exp_{\gamma,\tau}^*(t+s)\le\,\exp_{\gamma,\tau}^*(t)\,\exp_{\gamma,\tau}^*(s)+\exp_{\gamma,\tau}^*(t)+\exp_{\gamma,\tau}^*(s) .
\end{equation}
\item[(ii)] $\widetilde \exp_{\gamma,\tau}$ is a $N$-function satisfying
\begin{equation}\label{propAgamma}
\widetilde \exp_{\gamma,\tau}(t)\le\,\exp_{\gamma,\tau}^*(t)\text{ if }t\ge\,0\text{ and }\lim_{t\to\infty}\frac{\widetilde \exp_{\gamma,\tau}(t)}{\exp_{\gamma,\tau}^*(t)}=\,1\,.
\end{equation}
\end{itemize}
\end{lemma}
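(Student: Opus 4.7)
The plan is to work throughout with the exponent $f(t):=t/(\log(t+\tau))^\gamma$, so that $\exp_{\gamma,\tau}=e^{f}$, and derive all properties of $\exp_{\gamma,\tau}$ and $\widetilde\exp_{\gamma,\tau}$ from properties of $f$. Smoothness is then immediate. Writing $L:=\log(t+\tau)$, a direct differentiation gives
\[
f'(t) = L^{-\gamma-1}\Bigl[L - \tfrac{\gamma t}{t+\tau}\Bigr],
\]
and the bracket is at least $\log\tau-\gamma\ge 0$ once $\tau\ge e$ and $\gamma\in[0,1]$, so $\exp_{\gamma,\tau}'=f'e^{f}>0$. For strict convexity one uses $\exp_{\gamma,\tau}''=(f''+(f')^{2})e^{f}$; a second differentiation yields
\[
f''(t)=\frac{\gamma\,L^{-\gamma-2}}{(t+\tau)^{2}}\bigl[(\gamma+1)t-L(t+2\tau)\bigr],
\]
which is negative, but $|f''(t)|$ is $O(L^{-\gamma-1}/(t+\tau))$ while $(f'(t))^{2}\sim L^{-2\gamma}$; since $\gamma\in[0,1]$ forces $L^{\gamma-1}\le 1$ whenever $L\ge 1$, a uniform choice $\tau_{0}$ large enough makes $(f')^{2}>|f''|$ for every $t\ge0$ and $\gamma\in[0,1]$. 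This is the main technical step and the expected obstacle; once in place, everything else is bookkeeping.

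For the submultiplicativity \eqref{sublineartEgamma}, taking logarithms reduces it to the subadditivity $f(t+s)\le f(t)+f(s)$. Writing $f(u)=u\,g(u)$ with $g(u):=(\log(u+\tau))^{-\gamma}$ nonincreasing in $u$, one has
\[
f(t+s)=t\,g(t+s)+s\,g(t+s)\le t\,g(t)+s\,g(s)=f(t)+f(s).
\]
Then \eqref{sublineartE*gamma} drops out by subtracting $1$ from both sides of $\exp_{\gamma,\tau}(t+s)\le\exp_{\gamma,\tau}(t)\exp_{\gamma,\tau}(s)=(1+\exp_{\gamma,\tau}^{*}(t))(1+\exp_{\gamma,\tau}^{*}(s))$ and expanding.

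For (ii), the key observation is that the linear correction subtracted in $\widetilde\exp_{\gamma,\tau}$ is tuned to kill the derivative at the origin. Indeed, from the formula for $f'$, the second summand vanishes at $t=0$, so $f'(0)=(\log\tau)^{-\gamma}$, hence $\exp_{\gamma,\tau}'(0)=f'(0)e^{f(0)}=(\log\tau)^{-\gamma}$, and therefore $a(s):=\widetilde\exp_{\gamma,\tau}'(s)=\exp_{\gamma,\tau}'(s)-(\log\tau)^{-\gamma}$ satisfies $a(0)=0$. By the strict convexity established in (i), $a$ is strictly increasing, hence positive on $(0,\infty)$; smoothness gives right continuity; and $a(t)\to\infty$ because a convex unbounded function has derivative tending to infinity. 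Since $\widetilde\exp_{\gamma,\tau}(0)=0$, the fundamental theorem of calculus gives $\widetilde\exp_{\gamma,\tau}(t)=\int_0^t a(s)\,ds$, so it is an $N$-function. The inequality in \eqref{propAgamma} is immediate because a nonnegative linear term has been subtracted, and the limit follows from the super-linear growth of $\exp_{\gamma,\tau}^{*}$: for $\gamma\le 1$ one has $\exp_{\gamma,\tau}^{*}(t)/t\to\infty$, so the ratio $[t/(\log\tau)^{\gamma}]/\exp_{\gamma,\tau}^{*}(t)$ tends to $0$.
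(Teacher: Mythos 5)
Your proof is correct and follows essentially the same route as the paper's: compute the first two derivatives of the log-exponent $f(t)=t/(\log(t+\tau))^\gamma$, show $(f')^2+f''>0$ for $\tau$ large and $\gamma\in[0,1]$, reduce the submultiplicativity \eqref{sublineartEgamma} to subadditivity of $f$ via monotonicity of $(\log(\cdot+\tau))^{-\gamma}$, and verify the $N$-function axioms for the integrand $a=\widetilde\exp_{\gamma,\tau}'$. One small slip: the assertion ``a convex unbounded function has derivative tending to $\infty$'' is false in general (take $t\mapsto t$); what you actually need is the superlinear growth of $\widetilde\exp_{\gamma,\tau}$, which, combined with the monotonicity of $a$, forces $a(t)\to\infty$.
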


\begin{proof}
\noindent(i)  By a simple calculation, it is easy to see that, if $\tau>\,1$, $\exp_{\gamma,\tau}$ is well-defined, $\exp_{\gamma,\tau}\in C^\infty([0,\infty))$ and 
\begin{align*}
	\exp'_{\gamma,\tau}(t)
	&=\,\exp\left( \frac{t}{(\log( t+\tau))^\gamma}\right)
	\frac{\log( t+\tau)-\displaystyle{\frac{\gamma\,t}{t+\tau}}}{(\log (t+\tau))^{\gamma+1}}, \\
	\exp''_{\gamma,\tau}(t)
	&=\,\frac{\exp\left( \displaystyle{\frac{t}{(\log (t+\tau))^\gamma}}\right)}{(\log (t+\tau))^{2\gamma+2}}\Big[\left(\log (t+\tau)-\frac{\gamma\,t}{t+\tau}\right)^2\\
	&\qquad\qquad-(\log (t+\tau))^{\gamma+1}\frac{\gamma\,\tau+ \gamma (t+\tau)}{(t+\tau)^2}+\frac{\gamma (\gamma+1)t (\log (t+\tau))^\gamma}{(t+\tau)^2}\Big] 
\end{align*}
for all $t\ge 0$. Now, observe that, if $\tau>e$ and $t\ge0$, then
\begin{equation}\label{estimfirstdertildeEgamma}
\frac{\gamma\,t}{t+\tau}\le\,\gamma,
\qquad\frac{\gamma\,\tau+\gamma (t+\tau)}{(t+\tau)^2}\le\,\frac{ 2\,\gamma}{\tau},
\text{ and }
\log(t+\tau)> 1\ge\gamma.
\end{equation}
Combining these inequalities, it follows that, if $\tau>e$,
\begin{equation}\label{derEgammapos}
\exp'_{\gamma,\tau}(t)>0\text{ for each }t\ge\,0\,,
\end{equation}
so that $\exp_{\gamma,\tau}$ is strictly increasing on $[0,\infty)$.
  Let us now show that,  for sufficiently
large $\tau$, one has
\begin{equation}\label{2nddertEgamma}
\exp''_{\gamma,\tau}(t)>\,0\text{ for each }t\ge\,0\,.
\end{equation}
 By \eqref{estimfirstdertildeEgamma}, for each $t\ge\,0$ and $\tau>\,e$, we obtain that
\[
\begin{split}
\left(\log (t+\tau)-\frac{\gamma\,t}{t+\tau}\right)^2
&-(\log (t+\tau))^{\gamma+1}\frac{\gamma\,\tau+ \gamma (t+\tau)}{(t+\tau)^2}+\frac{\gamma (\gamma+1)t\log (t+\tau))^\gamma}{(t+\tau)^2}\\
&\ge\,\left(\log (t+\tau)-\frac{\gamma\,t}{t+\tau}\right)^2
-(\log (t+\tau))^{\gamma+1}\frac{\gamma\,\tau+ \gamma (t+\tau)}{(t+\tau)^2}\\
&\ge\,\left(\log (t+\tau)-\gamma\right)^2-\frac{2\gamma}{\tau}(\log (t+\tau))^{2}\\
&=\log(t+\tau) \left( \log(\tau) -2\gamma \left( \frac{\log(\tau)}{\tau}+1\right) \right) + \gamma^2 \\
&\ge \log(t+\tau) \left( \log(\tau) -2 \left( \frac{\log(\tau)}{\tau}+1\right) \right) .
\end{split}
\]
It is clear that, there is $\tau_0>0$ (independent on $\gamma$), so that the latter quantity is positive for all $\tau\ge\tau_0$ and $t\ge0$.
Hence, \eqref{2nddertEgamma} follows and $\exp_{\gamma,\tau}$ is strictly convex on $[0,\infty)$. 

Let us show \eqref{sublineartEgamma}, that is, for every $t,s\in [0,\infty)$,
\begin{equation}\label{sublineartEgamma2}
\begin{split}
\exp_{\gamma,\tau}(t+s)&=\,\exp\left( \frac{t+s}{(\log (t+s+\tau))^\gamma}\right) \\
&\le\, \exp_{\gamma,\tau}(t)\,\exp_{\gamma,\tau}(s) \\
&=\,\exp\left( \frac{t}{(\log (t+\tau))^\gamma}+\frac{s}{(\log (s+\tau))^\gamma}\right) \,.
\end{split}
\end{equation}
Observe that
\[
\begin{split}
 \frac{t+s}{(\log (t+s+\tau))^\gamma}&=\,\frac{t}{(\log (t+s+\tau))^\gamma}+\frac{s}{(\log (t+s+\tau))^\gamma}\\
 &\le\,\frac{t}{(\log (t+\tau))^\gamma}+\frac{s}{(\log (s+\tau))^\gamma}\,,
 \end{split}
\]
whence \eqref{sublineartEgamma2} follows, being the exponential function nondecreasing.  Inequality \eqref{sublineartE*gamma} follows by using \eqref{sublineartEgamma} and the fact that $\exp_{\gamma,\tau}(t)=\, \exp_{\gamma,\tau}^*(t)+1$.

\noindent(ii) Notice that,
if 
\[
a(t):=\,\exp'_\gamma(t)-\exp'_\gamma(0)\text{ if } t\ge\,0\,,
\]
by \eqref{derEgammapos} and  \eqref{2nddertEgamma},  $a$ is continuous, (strictly) increasing, $a(0)=\,0$, $a(t)>\,0$ if $t>\,0$ and $\lim_{t\to\infty}a(t)=\,\infty$. Moreover, being $a$ increasing, we have for $t\ge0$,
\[
\widetilde \exp_{\gamma,\tau}(t)=\,\exp_{\gamma,\tau}(t)-1-\frac{t}{(\log \tau)^\gamma}=\,\int_0^t\left(\exp'_\gamma(s)-\exp'_\gamma(0)\right)\,ds=\,\int_0^ta(s)\,ds .
\]
Thus, $\widetilde \exp_{\gamma,\tau}$ is (strictly) convex.

Finally, since the functions have a more than linear growth, it is clear that $\widetilde\exp_{\gamma,\tau}(t)/\exp_{\gamma,\tau}(t)$
tends to 1 as $t\to\infty$.
\end{proof}

\begin{remark}
	Notice that, by~\eqref{propAgamma}, the pair $(\widetilde \exp_{\gamma,\tau},\Omega)$ is never $\Delta$-regular for any $\Omega\subset\R^n$.
	In particular, by Lemma~\ref{KAsv}, $K_\vf(\Omega)$ is never a vector space if $\vf= \widetilde \exp_{\gamma,\tau}$.
\end{remark}

\begin{remark}
	Exponential growth functions fall also in the class treated by Theorem~\ref{thm:mainnew}.
	More precisely, the functions	
	\[
	e_\alpha^*(t) := \exp(\alpha t) - 1 ,
	\]
	for $\alpha>0$ satisfy the conditions on $\vf$ given in  Theorem~\ref{thm:mainnew}.
\end{remark}

\section{Proof of the approximation results}

In this section we are going to show our results. 

\begin{proof} [Proof of Theorem~\ref{thm:mainnew}] 
Uniform positivity of $w$ on compact subsets gives 
\begin{equation}\label{eq5ff48a0atgn}
\,\int_R\vf\left(|Db(s,x)|\right)\dd s \dd x< \infty\qquad\text{whenever $R\Subset I\times\Omega$}.
\end{equation}

We are going to exploit an adaptation of the technique of  the proof of Meyers-Serrin's theorem (see, for instance, \cite[Thm.~3.9]{AFP}). 
Let $Q:=\,I\times\Omega$ and let $U_j$, $j=0,1,\ldots$, be the nondecreasing sequence  of open subsets
\[
U_0:=\,\emptyset,\quad U_j:=\,\left\{(s,x)\in Q:\,{\rm dist}((s,x),\partial Q)>\frac{1}{j},\,
|s|+|x| < j
\right\} \,\,\,(j=1,2,\ldots),
\]
and let
\[
Q_j:=\,U_{j+1}\setminus\overline U_{j-1}\quad j=1,2,\ldots\,.
\]
Then $\cup_jQ_j=Q$, each $Q_j$ has compact closure in $Q$ and any point of $Q$ belongs to at most four sets $Q_j$.  
More specifically, if $j\ge\,3$ and $x\in Q_j$, then $x$ may belong at most to $Q_{j-1}$ and $Q_{j+1}$.

Let $(\zeta_j)_j$ be a partition of  unity relative to the covering $(Q_j)$, that is, nonnegative functions  $\zeta_j \in C^\infty_c(Q_j)$ such that $\sum_{j=1}^\infty\zeta_j\equiv\,1$ in $Q$.  Moreover, let $\psi_j\in C^\infty_c(Q)$ be cut-off functions such that $0\le\,\psi_j\le\,1$ in $Q$  
and $\psi_j\equiv \,1$ in $Q_j$. 

For each $j=1,2,\ldots$, let  $b_j:\,\R^{n+1}=\,\R_s\times\rn_x\to\rem$   denote 
 \[
b_j(s,x):=
\begin{cases}
\,\psi_j(s,x)\,b(s,x)
&\text{ if }(s,x)\in I\times\Omega\\
0&\text{ if }(s,x)\in \R^{n+1}\setminus I\times\Omega\,,
\end{cases}
\]
so that it is clear that ${\rm spt}(b_j)\Subset Q$, $b_j\in L^1(\R_s;W^{1,1}(\rn_x;\rem))\cap C^0_{\rm c}(\R^{n+1},\rem)$, 
and
\begin{equation}
Db_j=\,D(\psi_j\,b)=\,\psi_j\,D b+\nabla\psi_j\otimes b\text{ a.e.~in }Q\,.
\end{equation}
In particular, 
\begin{equation}\label{DbtjequalDbtOmegajgn}
b_j=\,b\text{ and } Db_j=\,D b\text{ a.e.~in }Q_j\, ,
\end{equation}
\begin{equation}\label{DbjL1}
Db_j\in L^1(\R^{n+1};\R^{nm})\, .
\end{equation}
The monotonicity of $\vf$ and the weak subadditivity condition \eqref{growthvf} give
\begin{equation}\label{estimvfDbtjgn}
\begin{split}
\vf\left(|Db_j|\right)
&\le\,\vf\left(\psi_j\,|D b|+|\nabla\psi_j\otimes b|\right)\\
&\le\,k_\vf\left[
\vf\left(\psi_j\,|D b|\right)\,\vf\left(|\nabla\psi_j\otimes b|\right)
+\vf\left(\psi_j\,|D b|\right)+\vf\left(|\nabla\psi_j\otimes b|\right) 
\right] .
\end{split}
\end{equation}
Since, by \eqref{eq5ff48a0atgn},
\[
\vf\left(\psi_j\,|D b|\right)\in L^1(\R^{n+1})\quad\text{ and }
\quad\vf\left(|\nabla\psi_j\otimes b|\right)\in L^1(\R^{n+1})\cap L^\infty(\R^{n+1})\,,
\]
we obtain from \eqref{estimvfDbtjgn} that
\begin{equation}\label{tvfDbtjsummablegn}
\vf\left(|Db_j|\right)\in  L^1(\R^{n+1}).
\end{equation}

Let  $(\rho_\eps(s,x))_\eps$ be space-time mollifiers on $\R^{n+1}=\,\R_s\times\rn_x$.
For each $\delta\in (0,1)$  we will make a suitable choice of $0<\eps_j<\delta$ and define 
$b_\delta:\,Q\to\rem$ as 
\[
b_\delta(s,x):= \sum_{j=1}^\infty\zeta_j(s,x)\,(\rho_{\epsilon_j}*b_j)(s,x)\,.
\]
Since the sum is locally finite, $b_\delta$ is well defined.
 Moreover, by construction,  $b_\delta\in C^\infty(I\times\Omega;\rem)$ and
one has
\begin{equation}\label{estimbdelta}
b_\delta\to b\text{ in }L^1_{\rm loc}(Q;\rem)\text{, as } \delta\to 0\,.
\end{equation}
Notice now that
\begin{equation}\label{represDbdeltagn}
\begin{split}
Db_\delta&=\,\sum_{j=1}^\infty D(\zeta_j(\rho_{\eps_j}*b_j))=\,\sum_{j=1}^\infty \zeta_j(\rho_{\eps_j}*Db_j)+\sum_{j=1}^\infty\nabla\zeta_j\otimes(\rho_{\eps_j}* b)\\
&=\,v_\delta+z_\delta\text{ in }Q\,,
\end{split}
\end{equation}
where
\[
v_\delta:=\sum_{j=1}^\infty \zeta_j(\rho_{\eps_j}*Db_j)\,,
\]
and 
\[
z_\delta:=\sum_{j=1}^\infty\left(\nabla\zeta_j\otimes(\rho_{\eps_j}* b_j)-\nabla\zeta_j\otimes b_j\right)\,,
\]
where we used the fact that,
since $\sum_{j=1}^\infty\nabla\zeta_j\equiv 0$, 
we have
$\sum_{j=1}^\infty\nabla\zeta_j\otimes b\equiv 0$.

For each $\delta>\,0$ and $j=1,2,\ldots$, we can find $0<\eps_j<\delta$ such that
\begin{equation}\label{propbde2gn}
\int_Q|\zeta_j\,(\rho_{\eps_j}*b_j)-\zeta_j\,b|\dd x\dd s<\frac{\delta}{2^j}\,
\end{equation}
and
\begin{equation}\label{propbde31newgn}
\|\zeta_j(\rho_{\eps_j}*Db_j)-\zeta_jDb_j\|_{L^1(\R^{n+1};\rem)}
<\frac{\delta}{2^{j+1}}\,.
\end{equation}
In addition, setting 
\[
M_j:=\,\max\{1,\sup_{Q_j}w\} \, ,
\]
we can also ensure that
\begin{equation}\label{propbde3gnnew}
\begin{split}
&\|\nabla\zeta_j\otimes(\rho_{\eps_j}* b_j)-\nabla\zeta_j\otimes b\|_{L^p(Q_j;\R^{nm})}\\
&=\,\|\nabla\zeta_j\otimes(\rho_{\eps_j}* b_j)-\nabla\zeta_j\otimes b\|_{L^p(\R^{n+1};\R^{nm})}<\frac{\delta}{2^{j+1} M_j}\text{ if }p=1,\infty\,
\end{split}
\end{equation}
and
\begin{equation}\label{propbde4newgn}
\|\rho_{\eps_j}*\vf(|Db_j|)-\vf(|Db_j|)\|_{L^1(\R^{n+1})}
<\frac{\delta}{2^j\,M_j}\,.
\end{equation}
Notice that, by assumption on $w$, $M_j<\infty$.  Notice also 
 that the choices in \eqref{propbde2gn} and \eqref{propbde3gnnew} are possible thanks to the continuity of $b$ 
  (in particular, for
 \eqref{propbde3gnnew} with $p=\infty$, we are using also continuity of $b$ with respect to the time variable), while the choices in \eqref{propbde31newgn} and \eqref{propbde4newgn} are possible thanks to the classical properties of convolution together with  \eqref{DbjL1} and \eqref{tvfDbtjsummablegn}, respectively. From \eqref{propbde3gnnew}, it  follows that
 \begin{equation}\label{estimzdeltap=1infty}
 \|z_\delta\|_{L^p(Q;\R^{nm})} < \frac{\delta}{2}\text{ if }p=1,\infty,
 \text{ and}\int_Q |z_\delta| \,w\dd x\dd s <\delta\,.
\end{equation}
Moreover,  by \eqref{estimzdeltap=1infty} with $p=\,\infty$ and   
the convexity of $\vf$, if we set $L:=\vf(1)$, then $\vf(0)=0$ and the monotonicity of difference quotients give
\begin{equation}\label{estimsigmadelta}
\sigma_\delta:=\vf(|z_\delta|)\le\,L |z_\delta|\text{ in }Q\,.
\end{equation}
Notice now that, since $Db=\,\sum_{j=1}^\infty \zeta_j Db=\,\sum_{j=1}^\infty \zeta_j Db_j$,
by  \eqref{propbde31newgn} and \eqref{estimzdeltap=1infty} with $p=1$,
we have
\begin{equation}\label{estimDbdelta}
\begin{split}
\|Db_\delta-Db\|_{L^1(Q;\rem)}&=\,\|v_\delta+z_\delta-Db\|_{L^1(Q;\rem)}\\
&\le\,\|v_\delta-Db\|_{L^1(Q;\rem)}+\,\|z_\delta\|_{L^1(Q;\rem)}\\
&\le\,\sum_{j=1}^\infty\|\zeta_j(\rho_{\eps_j}*Db_j)-\zeta_jDb_j\|_{L^1(\R^{n+1};\rem)}+\frac{\delta}{2}\\
&\le\, \sum_{j=1}^\infty\frac{\delta}{2^{j+1}}+\frac{\delta}{2}=\,\delta\,.
\end{split}
\end{equation}
By \eqref{estimDbdelta}, it follows that
\begin{equation}\label{convDbdelta}
\lim_{\delta\to0} \|Db_\delta-Db\|_{L^1(Q;\rem)} = 0 .
\end{equation} 
In particular, by the continuity of $\vf$ and \eqref{convDbdelta}, there exists an infinitesimal sequence $(\delta_h)_h$ such that, if $b_h:=\,b_{\delta_h}$,
\begin{equation}\label{pointconvfDbh}
\vf\left(|Db_h|\right)\to \vf\left(|Db|\right)\text{a.e. in $Q$, as }h\to\infty\,.
\end{equation}

Let us now show \eqref{convL1mainthm}. One has, a.e.~in~$Q$,
\begin{equation*}
\begin{split}
\vf\left(|Db_{\delta}|\right)&\le \vf\left(|z_\delta|+ |v_\delta|\right)
\le\,k_\vf\left(\vf\left(|z_\delta|\right)\,\vf\left( |v_\delta|\right)+\vf\left(|z_\delta|\right)+\vf\left( |v_\delta|\right)\right)\\
&\le\, k_\vf( (1+\sigma_\delta)\,\vf\left(\left|v_\delta\right|\right)+\sigma_\delta )\,,
\end{split}
\end{equation*}
where $\sigma_\delta=\vf\left(|z_\delta|\right)$.
Set $\sigma_\delta^\infty := \|\sigma_\delta\|_{L^\infty(Q)}$, so that,  a.e.~in~$Q$,
\begin{equation}\label{estimvfDbdelta}
\vf\left(|Db_\delta|\right)
\le\, k_\vf((1+\sigma_\delta^\infty)\,\vf\left(\left|v_\delta\right|\right)+\sigma_\delta)\,.
\end{equation}
By the monotonicity and convexity of $\vf$, by Jensen's inequality and taking into account that $(\zeta_j)_j$ is a partition of 
unity, we get,  a.e.~in~$Q$,
\begin{equation}\label{comparvtdeltaGdeltagn}
\begin{split}
\vf\left(\left|v_\delta\right|\right)&=\,\vf\left(\left|\sum_{j=1}^\infty \zeta_j(\rho_{\eps_j}*Db_j)\right|\right)\le
\,\vf\left(\sum_{j=1}^\infty\zeta_j (\rho_{\eps_j}* |Db_j|)\right)\\
&\le\,\sum_{j=1}^\infty\zeta_j (\rho_{\eps_j}*\vf\left( |Db_j|\right))=:\,
G_\delta\,.
\end{split}
\end{equation}

Hence, by \eqref{estimvfDbdelta} and \eqref{comparvtdeltaGdeltagn}, if follows that
\begin{equation}\label{estimvfDbdelta2}
w\vf\left(|Db_\delta|\right)
\le\,k_\vf ((1+\sigma_\delta^\infty)\,wG_\delta+\, w\sigma_\delta)
\quad \text{a.e. in }Q\,.
\end{equation}
It is clear that, by \eqref{estimzdeltap=1infty} and \eqref{estimsigmadelta}, 
\begin{equation}\label{convsigmadelta}
\text{$w\,\sigma_\delta$ converges to $0$ in $L^1(Q)$ and $\sigma_\delta^\infty\to 0$, as $\delta\to 0$.}
\end{equation}
Let us now prove that
\begin{equation}\label{convGdelta}
\text{$w\,G_{\delta}$ converges to $w\vf(|Db|)$ in $L^1(Q)$, as $\delta\to 0$}\,.
\end{equation}
Observe that, by \eqref{DbtjequalDbtOmegajgn},
\begin{equation*}
\vf(|Db|)=\,\sum_{j=1}^\infty \zeta_j\vf(|Db|)=\,\sum_{j=1}^\infty \zeta_j\vf(|Db_j|)\text{ a.e. in }Q\,,
\end{equation*}
so that \eqref{propbde4newgn} gives \eqref{convGdelta}. 

The combination of \eqref{convGdelta} and \eqref{estimvfDbdelta2} gives the equi-integrability of $w\vf\left(|Db_\delta|\right)$.
By using \eqref{pointconvfDbh}, Vitali's form of the dominated convergence theorem (see, for instance, \cite[Exercise~1.18]{AFP}),
gives \eqref{convL1mainthm}  and the proof is complete.
\end{proof}

 \begin{remark}\label{rem:different_integrability}\label{convNPsi}
 Let $\Psi:[0,\infty)\to [0,\infty)$ be any continuous function with $\Psi(0)=0$ and linear growth at the origin. 
Notice that all the terms  $\zeta_j(\rho_{\epsilon_j}*b_j)-\zeta_jb$ can be made arbitrarily small not only in $L^\infty_t(L^\infty_x)$,
but also in $L^1_t(L^1_x)$, choosing $\epsilon_j\ll 1$. Then, using the representation 
$$(b_{\delta_h}-b)=\sum_{j=1}^\infty\zeta_j(\rho_{\epsilon_j}*b_j)-\zeta_jb$$
we can improve the construction to get also
$$
\lim_{h\to\infty}\int_I\int_\Omega \Psi(|b_{\delta_h}-b|)\dd x\dd s=0.
$$
More precisely, choosing $\epsilon_j\ll 1$ properly, we can make arbitrarily small all terms
$$
\int_{Q_j} \Psi(|\sum_{j=1}^\infty\zeta_j(\rho_{\epsilon_j}*b_j)-\zeta_jb|)\dd x\dd s
$$
since the sum is locally finite and $Q_j\Subset I\times\Omega$.
\end{remark}

%
%

\begin{remark}
	Since the proof of Theorem~\ref{thm:mainnew} is based on a convolution argument,
	we have more control on the convergence depending on the properties of $b$.
	We give two cases that can be of interest.
	
	First, if $b\in L^1(I;C(\Omega;\rem))$ is continuous in the spatial variable, then the approximating sequence $b_h\in C^\infty(I\times\Omega;\rem)$ in Theorem~\ref{thm:mainnew}
	can be taken so that $b_h\to b$ in $b\in L^1(I;C(\Omega;\rem))$.
	
	Second,
	if there exists a bounded open set $\Omega'\Subset\Omega$ such that
	\begin{equation}\label{bcptsupp}
	{\rm spt}(b(t,\cdot))\subset\Omega'\text{ for every }t\in I\,,
	\end{equation}
	then
	the approximating sequence $b_h\in C^\infty(I\times\Omega;\rem)$
	can be taken with 
	\begin{equation}\label{bhcptsupp}
	{\rm spt}(b_h(t,\cdot))\subset \Omega' \text{ for each }t\in I \text{ and }h\in\N\,.
	\end{equation}
\end{remark}

\begin{proof}[Proof of Corollary~\ref{subexpoenergy}] 
Let us  prove that
\begin{equation}\label{sumEgammaDb}
\int_I\int_\Omega w(s,x)\exp_{\gamma,\tau}^*\left(|Db(s,x)|\right)\dd x \dd s<\infty\,.
\end{equation}
Notice that, since $b$ satisfies \eqref{eq5ff48a0aw}
with \eqref{vf=Egamma}, then 
$w\,\widetilde \exp_{\gamma,\tau}\left(|Db|\right)\in L^1(I\times\Omega)$.
On the one hand, if $w|Db|\in L^1(I\times\Omega)$, since 
\[
w\exp_{\gamma,\tau}^*\left(|Db|\right)=\,w\widetilde \exp_{\gamma,\tau}\left(|Db|\right)+\frac{w|Db|}{(\log \tau)^\gamma} 
\]
and $w\,\widetilde \exp_{\gamma,\tau}\left(|Db|\right)\in L^1(I\times\Omega)$
we immediately obtain \eqref{sumEgammaDb}.  
On the other hand, if
$w\in L^1(I\times\Omega)$, by \eqref{propAgamma}, there exists $\bar t>\,0$ such that
\begin{equation}\label{compEgammaAgamma}
\frac{1}{2} \, \exp_{\gamma,\tau}^*(t)\le\, \widetilde \exp_{\gamma,\tau}(t)\text{ for each }t\ge\,\bar t\,.
\end{equation}
Thus
\[
\begin{split}
\int_{I\times\Omega}w\,\exp_{\gamma,\tau}^*\left(|Db|\right)\,\dd s \dd x
&=\,\int_{\{|Db|<\,\bar t\}}w\,\exp_{\gamma,\tau}^*\left(|Db|\right)\,\dd s \dd x \\
&\qquad\qquad +\int_{\{|Db|\ge\,\bar t\}}w\,\exp_{\gamma,\tau}^*\left(|Db|\right)\,\dd s \dd x\\
&\le\,\exp_{\gamma,\tau}^*\left(\bar t\right)\int_{I\times\Omega}w\,\dd s \dd x\\
&\qquad\qquad +2\int_{I\times\Omega}w\,\widetilde \exp_{\gamma,\tau}\left(|Db|\right)\,\dd s \dd x<\,\infty
\end{split}
\]
and \eqref{sumEgammaDb} follows once more. 

By \eqref{sumEgammaDb}, we can apply Theorem~\ref{thm:mainnew} to get the existence of 
$b_h\in C^\infty(I\times\Omega;\rem)$  satisfying \eqref{convW^11loc} and \eqref{convL1mainthm} with $\vf=\,\exp_{\gamma,\tau}^*$. Since
$w\,\widetilde \exp_{\gamma,\tau}\left(|Db_h|\right)\le\,w\,\exp_{\gamma,\tau}^*\left(|Db_h|\right)$,
by applying Vitali's convergence theorem, we obtain again the desired conclusion.
\end{proof}

In the proof of Theorem~\ref{subexpoenergywindepx} we will need the following lemma.
\begin{lemma}\label{lmapproxwx}
Let $f\in L^1_{\rm loc}(\R_s\times\Omega)$  and $ (\rho_\eps(s))_\eps$ be a family of time mollifiers in $\R_s$. Then, for 
a.e. $x\in\Omega$, the time convolution product $f^\eps(\cdot,x):\R_s\to\R$
\begin{equation*}
\begin{split}
f^\eps(s,x)&=\,(\rho_\eps*f(\cdot,x))(s)\\
&:=\int_{\R}\rho_\eps(s-v)\,f(v,x)\,dv\text{ for each }s\in\R
\end{split}
\end{equation*}
and
\begin{equation}\label{lmapproxwx1}
f^\eps(\cdot,x)\in C^0(\R_s)\text{ for each $\eps>\,0$, for a.e. }x\in\Omega\,.
\end{equation}
In addition, for any open set $\omega\subset\Omega$ one has
\begin{equation}\label{lmapproxwx2}
\|f^\eps\|_{L^1(\R_s\times\omega)}\le\,\|f\|_{L^1(\R_s\times\omega)}\text{ for each $\eps>\,0$}
\end{equation}
and
\begin{equation}\label{lmapproxwx3}
f^\eps\to f\text{ in }L^1(\R_s\times\omega)\text{ as }\eps\to 0\,,
\end{equation}
provided that $f\in L^1(\R_s\times\omega)$. 

Finally, if we  assume that, for each ball $B(x_0,r)\Subset\Omega$ one has
\begin{equation}\label{contfinx}
f\in L^1\bigl(\R_s;C^0(B(x_0,r))\bigr)\,,
\end{equation}
then
\begin{equation}\label{contsconv}
f^\eps\in C^0(\R_s\times\Omega)\text{ for each $\eps>\,0$}\,.
\end{equation}
\end{lemma}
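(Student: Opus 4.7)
The statement is a collection of standard properties of one-variable mollification, applied separately for each $x\in\Omega$ and then glued by Fubini. The only delicate point is to ensure that the partial convolution $f^\eps(\cdot,x)$ is well-defined for a.e.~$x$, which requires knowing that $f(\cdot,x)\in L^1_{\rm loc}(\R_s)$ for a.e.~$x$. I would first fix an increasing sequence $K_k\times \omega_k\Subset\R_s\times\Omega$ exhausting $\R_s\times\Omega$; since $f\in L^1_{\rm loc}$ each integral $\int_{\omega_k}\int_{K_k}|f(v,x)|\dd v\dd x$ is finite, and Fubini yields a full measure set $\Omega_k\subset\omega_k$ on which $f(\cdot,x)\in L^1(K_k)$. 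Intersecting over $k$ gives $f(\cdot,x)\in L^1_{\rm loc}(\R_s)$ for a.e.~$x\in\Omega$, so for every such $x$ and every $\eps>0$ the convolution $\rho_\eps*f(\cdot,x)$ is defined and belongs to $C^\infty(\R_s)$ by the classical smoothing property of mollifiers. This proves \eqref{lmapproxwx1}.

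For the $L^1$ estimate \eqref{lmapproxwx2}, I would apply Tonelli's theorem and the one-variable Young inequality slice by slice:
\[
\int_{\R_s\times\omega}|f^\eps(s,x)|\dd s\dd x
=\int_\omega \|\rho_\eps*f(\cdot,x)\|_{L^1(\R_s)}\dd x
\le \|\rho_\eps\|_{L^1(\R)}\int_\omega\|f(\cdot,x)\|_{L^1(\R_s)}\dd x,
\]
which equals $\|f\|_{L^1(\R_s\times\omega)}$ since $\rho_\eps$ has unit mass. The convergence \eqref{lmapproxwx3} then follows by the standard translation argument: writing $f^\eps(s,x)-f(s,x)=\int\rho_\eps(\sigma)(f(s-\sigma,x)-f(s,x))\dd \sigma$, Minkowski's integral inequality and continuity of translations in $L^1(\R_s\times\omega)$ give
\[
\|f^\eps-f\|_{L^1(\R_s\times\omega)}\le\int\rho_\eps(\sigma)\,\|\tau_\sigma f-f\|_{L^1(\R_s\times\omega)}\dd\sigma\xrightarrow[\eps\to 0]{}0.
\]

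The one step requiring care is the joint continuity \eqref{contsconv} under assumption \eqref{contfinx}. Fix $(s_0,x_0)\in\R_s\times\Omega$ and a ball $B(x_0,r)\Subset\Omega$, and consider a sequence $(s_n,x_n)\to(s_0,x_0)$ with $x_n\in B(x_0,r)$. For a.e.~$v\in\R_s$ the slice $f(v,\cdot)$ is continuous on $B(x_0,r)$, so $f(v,x_n)\to f(v,x_0)$, while $\rho_\eps(s_n-v)\to\rho_\eps(s_0-v)$ pointwise. To invoke dominated convergence I would use the envelope $|\rho_\eps(s_n-v)f(v,x_n)|\le\|\rho_\eps\|_{L^\infty(\R)}\sup_{y\in B(x_0,r)}|f(v,y)|$, whose integrability in $v$ is exactly the hypothesis $f\in L^1(\R_s;C^0(B(x_0,r)))$. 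The main (mild) obstacle is precisely justifying this domination in the presence of a moving spatial argument; once that is in hand, dominated convergence gives $f^\eps(s_n,x_n)\to f^\eps(s_0,x_0)$ and \eqref{contsconv} follows.
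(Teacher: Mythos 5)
Your proposal is correct and follows essentially the same route as the paper: for \eqref{lmapproxwx1}--\eqref{lmapproxwx3} the paper simply defers to the standard mollification arguments (citing \cite[Section~2.1]{AFP}), which is exactly what you spell out via Fubini slicing, Young's inequality, and continuity of translations; and for \eqref{contsconv} your dominated convergence argument, with dominating function $\|\rho_\eps\|_{L^\infty}\sup_{B(x_0,r)}|f(v,\cdot)|$ and the WLOG restriction $x_n\in B(x_0,r)$, is identical to the paper's.
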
 
\begin{proof} Properties \eqref{lmapproxwx1},  \eqref{lmapproxwx2} and \eqref{lmapproxwx3} can be proved as in the case of the global $(s,x)$-convolution by mollifiers (see, for instance, \cite[Section~2.1]{AFP}). Let us prove \eqref{contsconv}. Let  $(s_0,x_0)\in\R_s\times\Omega$ and let $((s_h,x_h))_h\subset\R_s\times\Omega$ a sequence converging to $(s_0,x_0)$. From \eqref{contfinx},
\[
\int_\R F(s)\,ds<\infty \text{ if }F(s):=\,\sup_{B(x_0,r)}|f(s,\cdot)|\,,
\]
and, without loss of generality, we can assume that $(x_h)_h\subset B(x_0,r)$ for a fixed $r>0$. Then,
since
\[
 |\rho_\eps(s_h-v)\,f(v,x_h)|\le\,\sup_\R\rho_\eps\,F(v)\quad\text{for a.e. $v\in\R$},
\]
by Lebesgue's dominated convergence theorem, it follows that
\[
\begin{split}
\lim_{h\to\infty}f^\eps(s_h,x_h)&=\,\lim_{h\to\infty}\int_\R \rho_\eps(s_h-v)\,f(v,x_h)\,dv\\
&=\,\int_\R\lim_{h\to\infty}( \rho_\eps(s_h-v)\,f(v,x_h))\,dv\\
&=\,\int_\R \rho_\eps(s_0-v)\,f(v,x_0)\,dv=\,f^\eps(s_0,x_0)\,.
\end{split}
\]
\end{proof}

\begin{proof}[Proof of Theorem \ref{subexpoenergywindepx}] 
We extend $b$ to $\R\times\Omega$ setting $b(t,x)=0$ whenever $t\notin I$. Denoting by $b^\epsilon$ the
mollified functions with respect to the time variable, one has
$$
Db^\epsilon(t,x)=\int_\R\rho_\epsilon(t-s)Db(s)\dd s
$$
and therefore Jensen's inequality gives
\begin{eqnarray*}
\int_\Omega w(x)\vf(Db^\epsilon(t,x))\dd x&\le&\int_\Omega w(x)\int_\R\rho_\epsilon(t-s)\vf(|Db(s,x)|)\dd s\dd x\\
&=&
\int_\R\rho_\epsilon(t-s)\int_\Omega w(x)\vf(Db(s,x)|)\dd x\dd s\,.
\end{eqnarray*}
By integration on $I$, it follows that $N_{\vf,w}(|Db^\epsilon|)\le N_{\vf,w}(|Db|)$.
Notice now that $b^\epsilon$ satisfy the assumptions of Corollary \ref{subexpoenergy}, thanks to \eqref{contsconv}.
Thus for all $\epsilon>0$ we get the existence of a sequence $(b^\epsilon_h)_h\subset C^\infty(I\times\Omega;\rem)$ satisfying \eqref{convW^11loc} and \eqref{convL1mainthm}. Finally, by taking a diagonal sequence, we get the desired conclusion.
\end{proof}

\begin{proof}[Proof of Theorem \ref{thm:main2}]
Notice that for any open set $A\subset\Omega$ the weak $L^1(A;\R^{mn})$ convergence of derivatives grants 
$$
\liminf_{h\to\infty}\int_A \vf\left(|Db_h|\right)\dd x
\ge\int_A \vf\left(|Db|\right)\dd x\, .
$$
Therefore, by applying an elementary lemma (see, for instance, the proof of Proposition~1.80 in \cite{AFP}) 
the convergence of the integrals on $\Omega$ can be localized, getting
$$
\lim_{h\to\infty}\int_A \vf\left(|Db_h|\right)\dd x 
=\int_A \vf\left(|Db|\right)\dd x 
$$
whenever $A\subset\Omega$ is open with Lebesgue negligible boundary.
In particular, choosing $A\Subset\Omega$ with this property, since $A$ has finite measure 
we can use the strict convexity of $\vf$ and \cite{V} to get that $Db_h\to Db$ in $L^1(A;\R^{mn})$.
It follows that $Db_h\to Db$ in  $L^1_{\rm loc}(I\times\Omega;\R^{nm})$ and therefore, modulo the extraction of a subsequence, we can assume that $Db_h\to Db$ a.e. in $\Omega$. 

Combining the pointwise convergence
$$
\lim_{h\to\infty}\vf(|Db_h|)=\vf(|Db|)\quad\text{a.e. in $\Omega$}
$$
with the convergence of the integrals, Scheff\'e's lemma gives that $\vf(|Db_h|)$ converges in $L^1(\Omega)$ to $\vf(|Db|)$. 
Now, the inequality
$$
\vf\left(\frac{|Db_h-Db|}{2}\right)
\le\frac 12  \vf\left(|Db_h|\right)
+\frac 12 \vf\left(|Db|\right)
$$
grants the equi-integrability of $\vf(|Db_h-Db|/2)$. Vitali's convergence theorem can finally be applied to get the result.
\end{proof}

\begin{proof}[Proof of Theorem \ref{approxOS}] Notice that, being $\widetilde\exp$ a $N$-function, it has a linear growth at the origin. Thus by applying Remark~\ref{convNPsi} with $\Psi=\widetilde\exp$  and Corollary ~\ref{subexpoenergy} with $b(t,x)=\,u(x)$ and $w\equiv 1$, we get the desired conclusion.
\end{proof}

\def\refname{References}

\end{document}